\newtheorem{theorem}{Theorem}
\newtheorem{lemma}{Lemma}
\newtheorem{corollary}{Corollary}
\newtheorem{proposition}{Proposition}
\newtheorem{conjecture}{Conjecture}
\newtheorem{sketch}{Sketch of Proof}
\newtheorem{definition}{Definition}
\newtheorem{remark}{Remark}
\newtheorem{example}{Example}
\newcommand{\defeq}{\stackrel{\Delta}{=}}
\newcommand{\bLam}{\mathbf{\Lambda}}
\begin{document}

\title{
Analysis and Design of Multiple-Antenna Cognitive Radios with Multiple Primary User Signals}

\author{David Morales-Jimenez, Raymond H.\ Y.\ Louie, Matthew R. McKay, and Yang Chen
\thanks{The work of D. Morales-Jimenez and M. R. McKay was supported by the Hong Kong Research Grants Council under grant number 616713. R. H. Y. Louie was supported by HKUST research grant IGN13EG02. Y. Chen was supported by research grant FDCT 077/2012/A3. Part of this work will be presented at the IEEE International Conference on Communications (ICC), Sydney (Australia), June 2014.}
\thanks{D. Morales-Jimenez, R. H. Y. Louie, and M. R. McKay are with the Department of Electronic and Computer Engineering, Hong Kong University of Science and Technology, Clear Water Bay, Kowloon (Honk Kong). (e-mail:\{eedmorales,eeraylouie,eemckay\}@ust.hk)}
\thanks{Y. Chen is with the Department of Mathematics, University of Macau, Av. Padre Tom\'{a}s Pereira, Taipa (Macau). (e-mail: yayangchen@umac.mo)}
\thanks{This work has been submitted to the IEEE for possible publication. Copyright may be transferred without notice, after which this version may no longer be accessible.}
}

 \maketitle 
\begin{abstract}
We consider multiple-antenna signal detection of primary user transmission signals by a secondary user receiver in cognitive radio networks. The optimal detector is analyzed for the scenario where the number of primary user signals is no less than the number of receive antennas at the secondary user. We first derive exact expressions for the moments of the generalized likelihood ratio test (GLRT) statistic, yielding approximations for the false alarm and detection probabilities. We then show that the normalized GLRT statistic converges in distribution to a Gaussian random variable when the number of antennas and observations grow large at the same rate. Further, using results from large random matrix theory, we derive expressions to compute the detection probability without explicit knowledge of the channel, and then particularize these expressions for two scenarios of practical interest: 1) a single primary user sending spatially multiplexed signals, and 2) multiple spatially distributed primary users. Our analytical results are finally used to obtain simple design rules for the signal detection threshold.
\end{abstract}

\begin{keywords}
Signal detection, cognitive radio, spectrum sensing, generalized maximum likelihood ratio test, sphericity test.
\end{keywords}


\section{Introduction}

Cognitive radio is a promising technology which can be used to improve the utilization efficiency of the radio spectrum by allowing secondary user (SU)
networks to co-exist with primary user (PU) networks through
spectrum sharing \cite{Mitola1999,haykin05,Akyildiz2008,Cabric2008,Peha2009}. A key requirement is that SU transmission will not adversely affect the PUs' performance. To achieve this, a common technique involves the SUs first detecting if at least one PU is transmitting, which is commonly referred to as ``spectrum sensing". If no signals are detected, the SUs are allowed to transmit. The importance of signal detection can be seen by its inclusion in the IEEE 802.22 standard, built on cognitive radio techniques \cite{Cordeiro2005}.

Signal detection has been extensively investigated over the past few decades (see  \cite{Mauchly1940,pillai1971,Wax1985} as examples of some seminal works), within different contexts of application (see, e.g., \cite{Cochran1995,Sirianunpiboon2013} in radar). Inspired by some of those seminal works, a number of signal detection tests have been proposed to detect PU transmission when there are multiple receive antennas at the SUs (see e.g., \cite{Besson2006,Zeng2008,
Zeng2009,Taherpour2010,Wang2010,fujikoshi2010,Ramirez2011,Wei2012b,Sedighi2013}). Optimality is often considered in the Neyman-Pearson sense, which involves comparing the generalized likelihood ratio (GLR) to a user-designed detection threshold. The GLR can be used to determine the false alarm and detection probabilities, which can then be subsequently used to design the threshold. The particular form of the GLR is dependent on the number of PU transmission signals, and whether noise and/or channel information is known at the SU receiver performing the signal detection. Albeit being a well-investigated subject, multiple-antenna based signal detection is a problem raising a substantial interest in the recent literature (see, e.g., \cite{Sedighi2013,Ramirez2011,Bianchi2011,Wei2012b,Taherpour2010}) since fundamental issues still remain open. In particular, very little is known for the performance of GLR-based detectors under the presence of multiple PU signals.

A reasonable scenario is to assume that nothing is known at the SU receiver, i.e., no noise and channel information are known. For this scenario, the false alarm and detection probability have been analyzed when there is only one PU signal (see e.g.,\ \cite{Wang2010,Taherpour2010}). However, the simultaneous presence of multiple PU signals is a common occurrence in current and next generation systems. This may occur, for example, in multiple-antenna systems where spatial multiplexing techniques are employed, or where multiple independent PUs (e.g., from adjacent cells) simultaneously access the same frequency channel. Furthermore, the number of PU signals are expected to grow given the current trend towards more dense networks with more users simultaneously served \cite{Li2011}. On the other hand, low-complexity cognitive devices (mobile units) can be reasonably assumed to have less antennas than the transmitters in the primary system. Thus, for many practical scenarios of interest, the number of PU signals $k$ is no less than the number of receive antennas at the SUs $n$. As a concrete example, we could have a primary system built on multiple-antenna base stations, where a large number of signals (say, e.g., $k=16$) are simultaneously transmitted in the downlink to different PUs. In contrast, we can consider mobile SUs having a limited number of antennas of, e.g., $n=4$. The non-symmetric complexity of base stations (with fixed deployments) and mobile terminals is indeed a common occurrence in practice which motivates our interest in cases where $k \geq n$. See, for instance, the LTE standard \cite{3GPPlte} or references on multiple-antenna based multi-user communications (e.g., \cite{choi04} and references therein).

From a practical perspective, however, the optimal test (i.e., optimal detector) takes a form which depends on the number of PU signals $k$ \cite{Ramirez2011}, which is generally not known to the cognitive devices. Therefore, a universally optimal detector would entail, prior to signal detection, the estimation of $k$ in order to determine the optimal test to be performed during the detection phase. Interestingly, for scenarios where $k \geq n$, the optimal test, referred to as the \textit{sphericity test} \cite{Mauchly1940}, takes the same form regardless of $k$. Therefore, the sphericity test would uniformly yield optimal results provided that the situation $k \geq n$ (i.e., full-rank signal) holds. This situation can be either estimated via, e.g., minimum description length (MDL) methods \cite{Huang2007}, or anticipated (assumed) in some typical scenarios where the number of PU signals can be known (or at least lower bounded) beforehand. This is the case of TV broadcasting stations or cellular base stations which, in compliance with wireless standards, have a given (known) number of transmitting antennas. Examples range from broadcasting standards, such as the European DVB-T2 \cite{Etsi} which considers two-antenna space-time Alamouti codes, to point-to-multipoint standards, such as IEEE 802.11n \cite{IEEE802b}, IEEE 802.16 \cite{IEEE802c}, or LTE \cite{3GPPlte}, which support up to sixteen transmit antennas according to their latest releases.

 
For these scenarios where $k \geq n$, exact expressions for the false alarm probability and the detection probability were derived in \cite{Wei2012b} when there are two receive antennas. For more general scenarios with arbitrary number of receive antennas and observations, \cite{Ramirez2011} conducted Monte Carlo simulations while \cite{pillai1971} \cite[pp. 230]{fujikoshi2010} derived infinite series expansions. However, the series expansions in \cite{pillai1971} involved complicated zonal polynomials or Meijer-G functions which are generally hard to compute (they are in fact integrals that need to be evaluated by numerical integration methods), while the false alarm probability expression in \cite[pp. 230]{fujikoshi2010} was not amenable to analysis. For the same general scenario, an approximation was considered in \cite{Wei2012b}; however, the approximation therein was only justified for the false alarm probability, and only then for a very small number of antennas. Despite having made some progress in the detection of multiple PU signals, the aforementioned works do not provide a tractable analysis for the probabilities of detection and false alarm to a full extent. It is our aim to fill this gap by providing accurate approximations for these probabilities which result in simple design rules for practical detectors.

In this paper, we derive accurate approximations for the false alarm and detection probabilities of the GLR detector\footnote{Note that the performance of the GLR detector has been previously shown to perform better than other detectors in many practical scenarios \cite{Wei2012b}, and thus we do not consider such comparisons in this paper.}
which: (i) are valid for any number of receive antennas, provided that $k \geq n$, and (ii) are easy-to-compute involving only a finite number of terms comprising the well-known Gamma function. This is facilitated by an expression for the moments of the GLR test (GLRT) statistic which we derive. Despite the computational benefits of these expressions over previous results, our results allow us to further analyze the detection performance in the asymptotic regime where the number of receive antennas and observations are large and of similar order. For this scenario, we first derive simple and accurate approximations for the moments and cumulants of the GLRT statistic, and then show that this statistic converges in distribution to a Gaussian random variable under the hypothesis of no PU signals being present. Moreover, we analyze the detection probability for a large number of PU signals with $k \geq n$. Using results from large random matrix theory, we show that the (instantaneous) detection probability can be accurately approximated without explicit knowledge of the channel for a practical number of antennas. Leveraging our analytical results, we then propose simple design rules to approximate the detection threshold that achieves a desired false alarm probability while maximizing the detection probability.

\section{Problem Statement}
\label{sec:statement}

Consider a wireless communications system where a SU receiver equipped with $n$ antennas is tasked with determining if PU transmission signals are present from $m$ independent and identically distributed (IID) observation sample vectors $\mathbf{x}_1,\ldots,\mathbf{x}_m$, where\footnote{$\mathbf{0}_{p,q}$ denotes the $p \times q$ matrix of all zeros.} $\mathbf{x}_\ell \sim \mathcal{CN}_{n,1}(\mathbf{0}_{n,1},\mathbf{R})$ for $\ell=1,\ldots,m$, and $\mathbf{R}$ is a $n \times n$ population covariance matrix. The $\ell$th sample vector $\mathbf{x}_{\ell}$ for this hypothesis testing problem is modeled as
\begin{align}\label{eq:single_detect}
& \mathcal{H}_0: \quad \mathbf{x}_{\ell} = \mathbf{n}_\ell \hspace{2cm} \text{no signal present} \notag \\
& \mathcal{H}_1: \quad \mathbf{x}_{\ell} = \mathbf{H} \mathbf{s}_\ell + \mathbf{n}_\ell \hspace{1cm} \text{signals present} 
\end{align}
where $\mathbf{n}_\ell \sim \mathcal{CN}_{n,1}(\mathbf{0}_{n,1}, \mathbf{I}_n N_0)$ denotes additive white Gaussian noise with variance $N_0$, $\mathbf{s}_{\ell} \in \mathbb{C}^k$ is the signal vector with ${\rm E} [ \mathbf{s}_{\ell} \mathbf{s}_{\ell}^\dagger ] = \mathbf{I}_k$, $\mathbf{H} \in \mathbb{C}^{n \times k}$ is the channel matrix from the PUs to the SU detector\footnote{Note that the PUs' transmit signal power is included in $\mathbf{H}$.}, which is assumed to be constant during the $m$ observation time periods, and $k$ is the number of PU transmission signals. Both $\mathbf{n}_\ell$ and $\mathbf{s}_{\ell}$ are assumed IID over $\ell=1,\ldots,m$, implying that the observation sample vectors $\mathbf{x}_{\ell}$ are also IID. Unless otherwise specified, we do not assume a specific distribution for $\mathbf{H}$; thus, our results can account for each PU transmission signal having different transmit power. We assume that $\mathbf{H}$, $k$ and $N_0$ are unknown at the detector, and that $\mathbf{H} \mathbf{H}^\dagger$ is positive-definite (full rank), i.e., $k \ge n$. The latter condition can correspond to the scenario where there are at least $n$ single-antenna transmitting PUs, or if there is at least one transmitting PU equipped with at least $n$ antennas which are utilized for spatial multiplexing. As discussed in the previous section, the full-rank condition can be either estimated prior to detection via, e.g., MDL methods, or anticipated in many typical scenarios with a known or lower bounded number of PU signals.

The detection problem in (\ref{eq:single_detect}) is equivalent to testing if the population covariance matrix $\mathbf{R}$ is one of two structures:
\begin{align}\label{eq:sample_detect}
& \mathcal{H}_0: \quad \mathbf{R}  = \mathbf{I}_n N_0 \hspace{2cm} \text{no signal present} \notag \\
& \mathcal{H}_1: \quad \mathbf{R} = \mathbf{H} \mathbf{H}^\dagger + \mathbf{I}_n N_0 \hspace{1cm} \text{signals present}  \; .
\end{align}
To proceed, it is convenient to introduce the observed data matrix $\mathbf{X}=\left[\mathbf{x}_1,\ldots,\mathbf{x}_m \right]$ and the sample covariance matrix
\begin{align} \notag
\mathbf{\hat{\mathbf{R}}} = \frac{1}{m} \sum_{\ell=1}^m \mathbf{x}_{\ell} \mathbf{x}_{\ell}^\dagger = \frac{1}{m} \mathbf{X} \mathbf{X}^\dagger \; .
\end{align}

Different testing criteria can be considered for the detection problem in (\ref{eq:single_detect}). Bayesian tests such as \cite{Sedighi2013,Sirianunpiboon2013} aim at minimizing an average cost function which involves all possible incorrect decisions. In contrast, we consider in this paper detectors of the Neyman--Pearson type, which aim at maximizing the probability of correct detection (probability of choosing $\mathcal{H}_1$ given $\mathcal{H}_1$) under a certain constraint on the maximum admissible probability of false alarm (probability of choosing $\mathcal{H}_1$ given $\mathcal{H}_0$).      
Since there are unknown parameters under both hypotheses ($N_0$ and $\mathbf{H}$), the Neyman--Pearson detector, which yields a uniformly most
powerful test, is not directly implementable. Therefore, we adopt the classical GLR approach as it has been shown to result in simple detectors with good performance \cite{Mardia1979}. The likelihood function of the observation samples is given by their joint density, i.e.,
\begin{align} \notag
f \left(\mathbf{x}_1,\ldots,\mathbf{x}_m\biggr| \mathbf{R} \right) = \frac{1}{\pi^{mn} \det(\mathbf{R})^m} {\rm etr}\left(-m \mathbf{\hat{R}} \mathbf{R}^{-1}   \right) ,
\end{align}
with ${\rm etr}(\cdot)=e^{{\rm Tr}(\cdot)}$.
The GLR $\mathcal{L}$, used for determining $\mathcal{H}_0$ or $\mathcal{H}_1$, is the ratio between likelihoods under $\mathcal{H}_0$ and $\mathcal{H}_1$, with these likelihoods maximized over the unknown parameters \cite[Sec. III]{Ramirez2011} \cite{Mardia1979}, i.e., 
\begin{align}
\mathcal{L} = \frac{ \sup\limits_{N_0 \in \mathbb{R}^{+}} f \left(\mathbf{x}_1,\ldots,\mathbf{x}_m\biggr| \mathbf{R}= \mathbf{I}_n N_0 \right)}{\sup\limits_{N_0 \in \mathbb{R}^{+}, ~ \mathbf{H} \in \mathbb{C}^{n \times k}} f \left(\mathbf{x}_1,\ldots,\mathbf{x}_m\biggr| \mathbf{R}= \mathbf{H}\mathbf{H}^\dagger+\mathbf{I}_n N_0 \right)} .
\end{align}

The GLRT then determines $\mathcal{H}_0$ or $\mathcal{H}_1$ by testing whether $\mathcal{L}$ is above or under a user-specified detection threshold.
The GLR $\mathcal{L}$ has been studied in the literature under different assumptions on the rank of $\mathbf{H}\mathbf{H}^\dagger$; see e.g., \cite{Bianchi2011} for rank-1 or \cite{Wax1985,Ramirez2011} for more general assumptions. Here, we are concerned with the case of $\mathbf{H}\mathbf{H}^\dagger$ having full rank, which has yet to be studied in detail in the literature. In this case, $\mathcal{L}$ can be obtained explicitly and the GLRT yields the well-known \textit{sphericity test} \cite{Mauchly1940}. We use $W$ to denote the corresponding GLRT statistic which admits\footnote{The GLRT statistic usually presented in literature is $\frac{1}{W}$, which is used to form the sphericity test \cite{Mauchly1940}. However, we work with $W$ for mathematical convenience.} \cite[Sec. III]{Ramirez2011}
 \begin{align}\label{eq:test}
W \triangleq   \frac{\frac{{\rm Tr}\left( \mathbf{X} \mathbf{X}^\dagger\right)}{n}}{ {\rm det}(\mathbf{X} \mathbf{X}^\dagger)^{\frac{1}{n}}} \mathop{\gtrless}\limits_{\mathcal{H}_0}^{\mathcal{H}_1} \eta ,
 \end{align}
where $\eta$ is a user-specified detection threshold.
  Thus, PU signals are deemed to be present if $W > \eta$, while no PU transmission is deemed if $W \leq \eta$. Note that $\mathcal{H}_1$, as defined in (\ref{eq:single_detect}), implies the presence of an arbitrary number of PU signals, i.e., it does not necessarily imply the presence of $k \geq n$ signals. However, if the full-rank assumption is violated ($k < n$), the test in (\ref{eq:test}) will only yield sub-optimal results without the probability of detection being maximized.

 \subsection{False Alarm and Detection Probability} \label{FA_PD_defs}
To evaluate the performance of the GLRT statistic (\ref{eq:test}), we consider the false alarm and the detection probability. The false alarm probability is
 \begin{align}\label{eq:falsealarm}
 {\rm P}_{\rm FA}(\eta) & \defeq {\rm Pr}\left(W_0 > \eta\right) = 1 - {\rm F}_{W_0}(\eta)
 \end{align}
where 
\begin{align} \notag
W_0\defeq\frac{\frac{{\rm Tr}\left( \mathbf{X} \mathbf{X}^\dagger\right)}{n}}{ {\rm det}(\mathbf{X} \mathbf{X}^\dagger)^{\frac{1}{n}}} \; ,\quad \mathbf{X} \sim \mathcal{CN}_{n,m}\left(\mathbf{0}_{n,m}, \mathbf{I}_n N_0 \right) 
\end{align}
and ${\rm F}_{W_0}(\eta)$ denotes the cumulative distribution function (c.d.f.) of $W_0$.
 The threshold  $\eta$ is typically chosen to ensure the false alarm probability does not exceed a maximum value $\alpha_0 \in (0,1)$, i.e.,
 \begin{align} \notag
 \eta = {\rm P}_{\rm FA}^{-1}(\alpha_0) \; .
 \end{align}
Note that the false alarm probability does not depend on $\mathbf{H}$ and, thus, the detection threshold can be designed regardless of the channel statistics; for instance, the designed threshold will be independent of the PUs' transmit power.
 
The probability of correct detection is 
   \begin{align} \label{eq:detection}
   {\rm P}_{\rm D}(\eta) & \defeq {\rm Pr}\left(W_1 > \eta\right) = 1 - {\rm F}_{W_1}(\eta)
   \end{align}
  where
  \begin{align} \notag
  W_1\defeq\frac{\frac{{\rm Tr}\left( \mathbf{X} \mathbf{X}^\dagger\right)}{n}}{ {\rm det}(\mathbf{X} \mathbf{X}^\dagger)^{\frac{1}{n}}} \; , \quad \mathbf{X} \sim \mathcal{CN}_{n,m}\left(\mathbf{0}_{n,m}, \mathbf{H} \mathbf{H}^\dagger + \mathbf{I}_n N_0  \right) 
  \end{align}
  and ${\rm F}_{W_1}(\eta)$ denotes the c.d.f.\ of $W_1$.
  

   
   

\section{C.d.f. of $W_0$ and $W_1$: Non-Asymptotic Analysis}
\label{sec:non-asymp}

In this section, we derive expressions for the c.d.f.\ of $W_0$ and $W_1$ for arbitrary $n$ and $m$ with $k \geq n$. We first present closed-form expressions for the moments of $W_0$ and $W_1$.

\subsection{Exact Moments}

\begin{theorem}\label{the:semicorr_fullrank}
The $p$th ($p \in \mathbb{Z}^{+}$) moment of $W_0$ and $W_1$, for $p < n(m-n+1)$, are respectively given by
\begin{align} \label{eq:iid_fullrank}
\mu_{W_0,p}&={\rm E}\left[W_0^p\right] \notag \\
&= \frac{\Gamma\left(mn\right) }{n^p \Gamma\left(mn -p \right)} \prod_{j=0}^{n-1} \frac{ \Gamma\left(m-n+1-\frac{p}{n} + j \right)}{\Gamma\left(m-n+1+j \right)} , \\
\label{eq:semicorr_fullrank}
\mu_{W_1,p}&={\rm E}\left[W_1^p\right] \notag \\
&= \frac{ p! \prod_{i=1}^n y_i^{-\frac{p}{n}} }{n^p } \prod_{j=0}^{n-1}\frac{ \Gamma\left(m-n+1-\frac{p}{n}+j\right)  }{ \Gamma\left(m-n+1+j \right)} \notag \\
& \times \sum_{k_1+\ldots + k_n=p} \, \prod_{i=1}^n \frac{\Gamma\left(m-\frac{p}{n}+k_i\right) y_i^{k_i}}{\Gamma(k_i+1) \Gamma\left(m-\frac{p}{n}\right)} ,
\end{align}
where $\Gamma(\cdot)$ denotes the Gamma function,
$k_1,\ldots,k_n$ are nonnegative integers,
and $N_0<y_1 \le y_2 \le \ldots\le y_n < \infty$ denote the eigenvalues of $\mathbf{H} \mathbf{H}^\dagger + \mathbf{I}_n N_0$.
\end{theorem}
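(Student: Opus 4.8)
The plan is to reduce both moment computations to a single complex-Wishart matrix integral and to evaluate that integral by a Laplace-transform (auxiliary-parameter) trick. Write $\mathbf{S}=\mathbf{X}\mathbf{X}^\dagger$, so that under $\mathcal{H}_1$ the matrix $\mathbf{S}$ is complex Wishart, $\mathbf{S}\sim\mathcal{CW}_n(m,\mathbf{R})$ with $\mathbf{R}=\mathbf{H}\mathbf{H}^\dagger+\mathbf{I}_nN_0$ (the null case being the specialisation $\mathbf{R}=\mathbf{I}_nN_0$). Since $W=\frac1n{\rm Tr}(\mathbf{S})(\det\mathbf{S})^{-1/n}$ depends on $\mathbf{S}$ only through the unitarily invariant quantities ${\rm Tr}\,\mathbf{S}$ and $\det\mathbf{S}$, and $\mathbf{X}\mathbf{X}^\dagger$ has the same law as $\mathbf{U}(\mathbf{Y}\mathbf{Y}^\dagger)\mathbf{U}^\dagger$ whenever $\mathbf{R}=\mathbf{U}\mathbf{D}\mathbf{U}^\dagger$ and $\mathbf{Y}\sim\mathcal{CN}_{n,m}(\mathbf{0},\mathbf{D})$, I may assume without loss of generality that $\mathbf{R}=\mathbf{D}=\mathrm{diag}(y_1,\dots,y_n)$ with $0<N_0\le y_i<\infty$. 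Then $\mu_{W,p}=n^{-p}\,{\rm E}\big[({\rm Tr}\,\mathbf{S})^p(\det\mathbf{S})^{-p/n}\big]$, and writing the expectation against the Wishart density merges $(\det\mathbf{S})^{-p/n}$ with the density's determinant factor, so the whole computation reduces to integrating $({\rm Tr}\,\mathbf{S})^p(\det\mathbf{S})^{m-n-p/n}\,{\rm etr}(-\mathbf{D}^{-1}\mathbf{S})$ over positive-definite $\mathbf{S}$.

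Next I would introduce a scalar parameter $t\ge 0$, use ${\rm etr}(-t\mathbf{S}){\rm etr}(-\mathbf{D}^{-1}\mathbf{S})={\rm etr}\big(-(t\mathbf{I}_n+\mathbf{D}^{-1})\mathbf{S}\big)$ together with $({\rm Tr}\,\mathbf{S})^p{\rm etr}(-t\mathbf{S})=(-1)^p\frac{d^p}{dt^p}{\rm etr}(-t\mathbf{S})$, so that the required integral equals $(-1)^p\frac{d^p}{dt^p}$, evaluated at $t=0$, of $\int_{\mathbf{S}>0}(\det\mathbf{S})^{\beta-n}\,{\rm etr}\big(-(t\mathbf{I}_n+\mathbf{D}^{-1})\mathbf{S}\big)\,d\mathbf{S}$ with $\beta=m-p/n$. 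This inner integral is the standard complex multivariate Gamma integral $\int_{\mathbf{S}>0}(\det\mathbf{S})^{\beta-n}{\rm etr}(-\mathbf{A}\mathbf{S})\,d\mathbf{S}=\tilde\Gamma_n(\beta)(\det\mathbf{A})^{-\beta}$, which converges precisely when $\Re\,\beta>n-1$, i.e.\ when $p<n(m-n+1)$; this is exactly the hypothesis of the theorem, and it is also what legitimises the interchange of differentiation and integration near $t=0$. Substituting $\mathbf{A}=t\mathbf{I}_n+\mathbf{D}^{-1}$ gives $(\det\mathbf{A})^{-\beta}=\prod_{i=1}^n(t+y_i^{-1})^{-\beta}=\big(\prod_{i=1}^n y_i^{\beta}\big)\prod_{i=1}^n(1+ty_i)^{-\beta}$.

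It then remains to compute $\frac{d^p}{dt^p}\prod_{i=1}^n(1+ty_i)^{-\beta}$ at $t=0$ via the general Leibniz (multinomial) rule: differentiating the $i$th factor $k_i$ times yields $(-1)^{k_i}\frac{\Gamma(\beta+k_i)}{\Gamma(\beta)}y_i^{k_i}$ at $t=0$, and summing over $k_1+\dots+k_n=p$ with weight $p!/\prod_i k_i!$ produces $(-1)^p p!\sum_{k_1+\dots+k_n=p}\prod_{i=1}^n\frac{\Gamma(\beta+k_i)y_i^{k_i}}{\Gamma(k_i+1)\Gamma(\beta)}$. Reassembling the prefactors --- the Wishart normalisation $1/(\tilde\Gamma_n(m)(\det\mathbf{D})^{m})$, the two sign factors $(-1)^p(-1)^p=1$, the powers $\prod_i y_i^{\beta}$ and $(\det\mathbf{D})^{-m}$ combining into $\prod_i y_i^{-p/n}$, and rewriting $\tilde\Gamma_n(m-p/n)/\tilde\Gamma_n(m)=\prod_{j=0}^{n-1}\Gamma(m-n+1-p/n+j)/\Gamma(m-n+1+j)$ via $\tilde\Gamma_n(a)=\pi^{n(n-1)/2}\prod_{j=0}^{n-1}\Gamma(a-j)$ and a reindexing --- yields \eqref{eq:semicorr_fullrank}. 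Equation \eqref{eq:iid_fullrank} then follows either as the special case $y_1=\dots=y_n=N_0$ (the $N_0$-powers cancel and the multinomial sum collapses to $\Gamma(mn)/(p!\,\Gamma(mn-p))$ on extracting the $x^p$ coefficient of $\prod_i(1-x)^{-\beta}=(1-x)^{-n\beta}$), or directly from $(\det\mathbf{A})^{-\beta}=(t+N_0^{-1})^{-n\beta}$. The only genuinely delicate step is the differentiation-under-the-integral justification; everything else is bookkeeping with Gamma functions, and it is precisely the moment-order restriction $p<n(m-n+1)$ that makes both that step and the underlying Gamma integral valid.
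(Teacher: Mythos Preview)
Your proof is correct and shares the same core idea as the paper's: introduce an auxiliary scalar parameter to replace $({\rm Tr}\,\mathbf{S})^p$ by a $p$th derivative, evaluate the resulting integral in closed form, and then carry out the differentiation via the general Leibniz rule. The difference is purely in the computational vehicle. The paper works at the eigenvalue level, starting from the joint density of the ordered eigenvalues of $\mathbf{XX}^\dagger$ and invoking a determinant-integral identity (of the type $\int \det(f_i(\lambda_j))\det(g_i(\lambda_j))\,d\lambda = \det(\int f_i g_j)$) to collapse the $n$-fold integral to a single determinant, which is then simplified. You instead stay at the matrix level, appealing directly to the complex multivariate Gamma integral $\int_{\mathbf{S}>0}(\det\mathbf{S})^{\beta-n}{\rm etr}(-\mathbf{A}\mathbf{S})\,d\mathbf{S}=\tilde\Gamma_n(\beta)(\det\mathbf{A})^{-\beta}$. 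This is arguably cleaner: it bypasses the eigenvalue density and the determinant identity entirely, and it makes the role of the convergence condition $\Re\beta>n-1$ (equivalently $p<n(m-n+1)$) more transparent. Your reduction of the $W_0$ case via the generating-function identity for the multinomial sum is also a nice touch; the paper simply states that the specialisation $y_1=\dots=y_n=N_0$ simplifies to \eqref{eq:iid_fullrank} without spelling out that collapse.
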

\begin{proof}
See Appendix \ref{app:semicorr_fullrank}.
\end{proof}
Note that (\ref{eq:iid_fullrank}) has been derived previously in \cite{Wei2012b}, while (\ref{eq:semicorr_fullrank}) is new.

\emph{Condition $p<n(m-n+1)$:} The condition on $p$, required for (\ref{eq:iid_fullrank}) and (\ref{eq:semicorr_fullrank}) to hold, becomes milder as the difference between the numbers of observations and antennas grow. The first few moments can be obtained for even a very small number of antennas and observations. For example, for $p=3$, the condition is satisfied when $n=2$ and $m=3$.

\subsection{C.D.F. Approximation: Edgeworth Expansion}

Armed with the derived expressions for the moments, we now aim at characterizing the c.d.f. of $W_0$ and $W_1$ in an easy-to-compute form, which can help in understanding the performance of the GLRT in order to design the detection threshold $\eta$.

As observed in the conference version of this paper \cite{ray2014}, the empirical distribution of $W_0$ approaches a Gaussian as the number of antennas and observations grow large with fixed ratio. As also pointed out in \cite{ray2014}, a similar phenomenon is observed for $W_1$.
This convergence motivates us to consider a Gaussian approximation for the c.d.f.\ of $W_0$ and $W_1$, corrected with additional terms obtained by the Edgeworth expansion \cite{blinnikov1997,li2013}. Specifically, the c.d.f. of an arbitrary random variable $X$ in $L$-truncated Edgeworth expansion takes the form \cite[eq. (45)]{li2013}
\begin{align} \label{edgeworth}
{\rm F}_X(x) & \approx \Phi(\tilde{x}) \nonumber \\
 & \hspace{-5mm} - \frac{e^{-\frac{\tilde{x}^2}{2}}}{\sqrt{2\pi}} \sum_{s=1}^L \sum_{\{\mathcal{J}_s\}} \frac{{\rm{He}}_{s+2r}(\tilde{x})}{\sigma_X^{s+2r}} \prod_{\ell=1}^s \frac{1}{j_{\ell}!} \left( \frac{\kappa_{X, \ell +2}}{(\ell +2)!} \right)^{j_{\ell}}
\end{align}
where $\tilde{x}=\frac{x-{\rm E}\left[X\right]}{\sigma_X}$, $\sigma_X$ is the standard deviation of $X$, $\Phi(\cdot)$ is the c.d.f. of a standardized Gaussian, $\{\mathcal{J}_s\}$ is the set containing the nonnegative integer solutions to $j_1+2j_2+\ldots+sj_s=s$, and $r=j_1+j_2+\ldots+j_s$.
Further, $\kappa_{X, p}$ is the $p$th cumulant of $X$, related to the first $p$ moments through
\begin{align} \label{cumulants_moments}
\kappa_{X,1} &= \mu_{X,1} \notag \\
\kappa_{X,p} &= \mu_{X,p} - \sum_{\ell=1}^{p-1} \binom{p-1}{\ell-1} \kappa_{X,\ell} \mu_{X, p-\ell} \; , \quad p \geq 2,
\end{align}
with $\mu_{X, p} = {\rm E} \left[X^p\right]$,
and ${\rm He}_{\ell}(z)$ is the Chebyshev--Hermite polynomial \cite[eq. (13)]{blinnikov1997}
\begin{align} \notag
{\rm He}_{\ell}(z) = \ell! \sum_{k=0}^{\lfloor\frac{\ell}{2}\rfloor} \frac{ (-1)^k z^{\ell-2k}}{k! (\ell-2k)! 2^k} \; ,
\end{align}
where $\lfloor \cdot \rfloor$ denotes the floor function.

In (\ref{edgeworth}), a truncation limit $L$ implies that $\kappa_{X, p}$, $p=3,4,\ldots,L+2$, are involved in the corrected c.d.f.
Particularizing (\ref{edgeworth}) for $L=2$ results in the following simple approximation for the c.d.f. of $W_0$ and $W_1$:
\begin{align}\label{eq:cdf_W01}
{\rm F}_{W_\ell}(\eta) \approx \mathcal{G}\left(\frac{\eta- \mu_{W_\ell,1}}{\sigma_{W_\ell}} ; \sigma_{W_\ell}, \kappa_{W_\ell,3}, \kappa_{W_\ell,4}\right),
\end{align}
for $\ell = 0,1$, with
\begin{align}\label{eq:Gfunction}
&\mathcal{G}(x; \sigma, \kappa_{3}, \kappa_{4}) = \Phi(x) - \sqrt{\frac{2}{\pi}} \frac{ e^{-\frac{x^2}{2}}}{12 \sigma^3}\left(\kappa_{3} (x^2-1)   \right. \notag \\
&  \left. +\frac{\kappa_{4}}{4 \sigma} x(x^2-3)+ \frac{(\kappa_{3})^2}{12 \sigma^3}x \left(x^4-10x^2+15\right) \right) .
\end{align}
More terms can be added ($L>2$) with an expected increase in accuracy; however, with $L=2$, i.e., involving up to the fourth cumulant, the c.d.f. of $W_0$ and of $W_1$ are already approximated with high accuracy.
This can be observed in Figs.\ \ref{fig:false_alarm_cdf} and \ref{fig:correct_detection_cdf}, which plot respectively the probability of false alarm $ {\rm P}_{\rm FA}(\eta) $ and the probability of detection $ {\rm P}_{\rm D}(\eta) $, both as a function of the detection threshold $\eta$. The `Analytical (Gaussian)' curves correspond to a Gaussian approximation without any correction terms ($L=0$), i.e.,  $\rm{F}_{W_1}(\eta) \approx \Phi((\eta-\mu_{W_1,1})/{\sigma_{W_1}})$ and $\rm{F}_{W_0}(\eta) \approx \Phi\left((\eta-\mu_{W_0,1})/{\sigma_{W_0}}\right)$, while the `Analytical (Correction)' curves are plotted using (\ref{eq:cdf_W01}). The `Analytical (Beta) \cite{Wei2012b}' curves are plotted using the Beta function approximation introduced in \cite{Wei2012b}. These results are all compared with the true c.d.f., computed via Monte Carlo simulation\footnote{To simulate the detection probability, we generate $\mathbf{H}$ as $\sim \mathcal{CN}_{n,k}\left(\mathbf{0}_{n,k}, \mathbf{I}_n\right)$, which is held constant for $m$ observation periods. This corresponds to a scenario where a single PU transmits $k$ spatially multiplexed signals and where the channel undergoes Rayleigh fading.}.

For the false alarm probability curves in Fig.\ \ref{fig:false_alarm_cdf}, we observe that the Gaussian approximation deviates from Monte Carlo simulations, thus justifying the use of additional correction terms. With these terms, the `Analytical (Correction)' curve closely matches the simulations with improved accuracy as $n$ and $m$ increase. Finally, the `Analytical (Beta) \cite{Wei2012b}' curve shows a satisfactory agreement for $\{n=4, m=15\}$, but it starts deviating for larger number of antennas and observations. 

For the detection probability curves in Fig.\ \ref{fig:correct_detection_cdf}, we again observe a significant deviation of the Gaussian approximation from the Monte Carlo simulations, especially for the case $\{n=4, m=15\}$. Moreover, for $\{n=10, m=20\}$, the `Analytical (Beta) \cite{Wei2012b}' curves are inaccurate for most detection probabilities as opposed to our `Analytical (Correction)' curves, which closely match the simulations for both configurations.

\begin{figure}[t]
\centerline{\includegraphics[width=0.85\columnwidth]{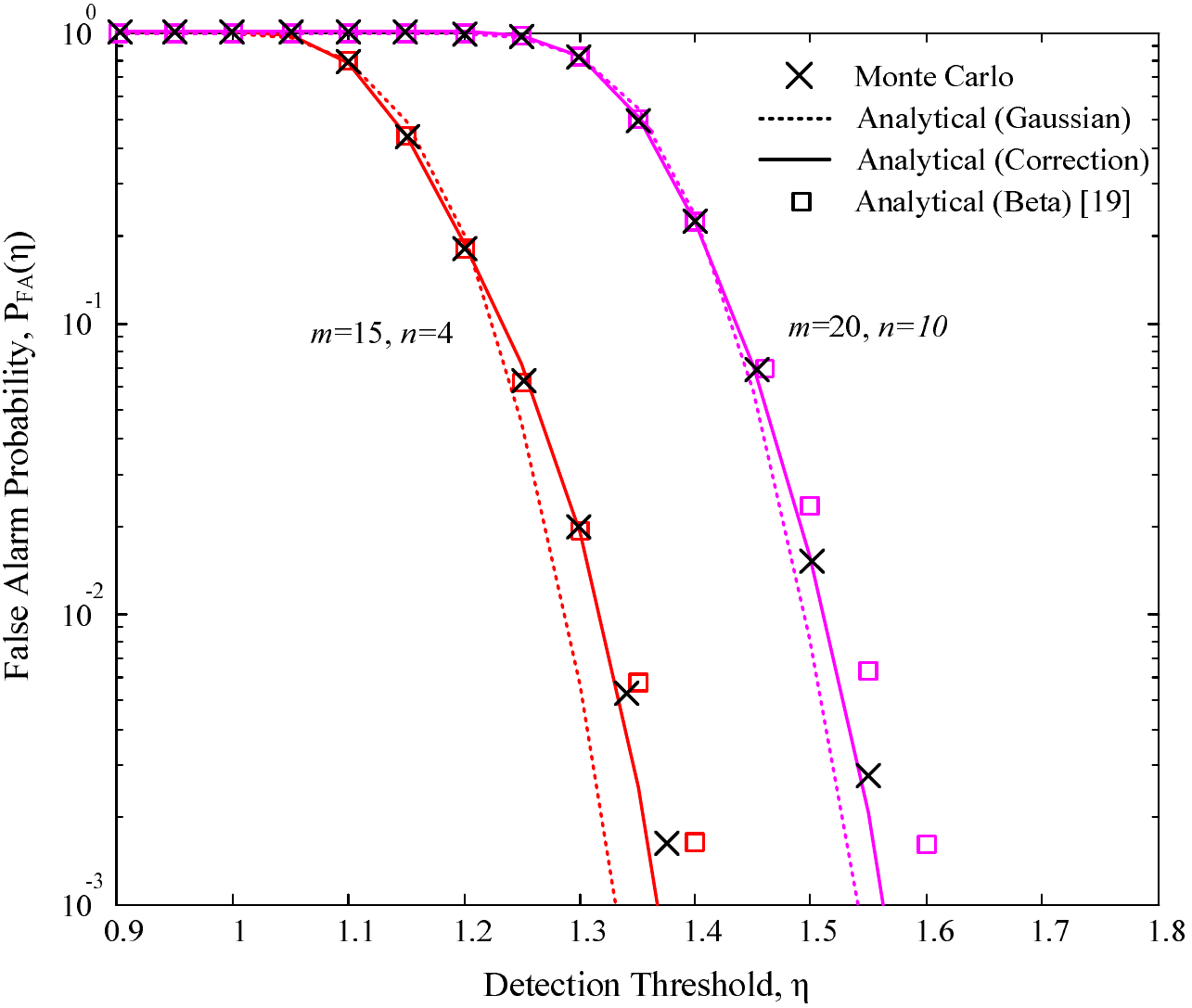}}
\caption{Probability of false alarm vs.\ detection threshold, with $k=n$.}
\label{fig:false_alarm_cdf}
\end{figure}

\begin{figure}[t]
\centerline{\includegraphics[width=0.85\columnwidth]{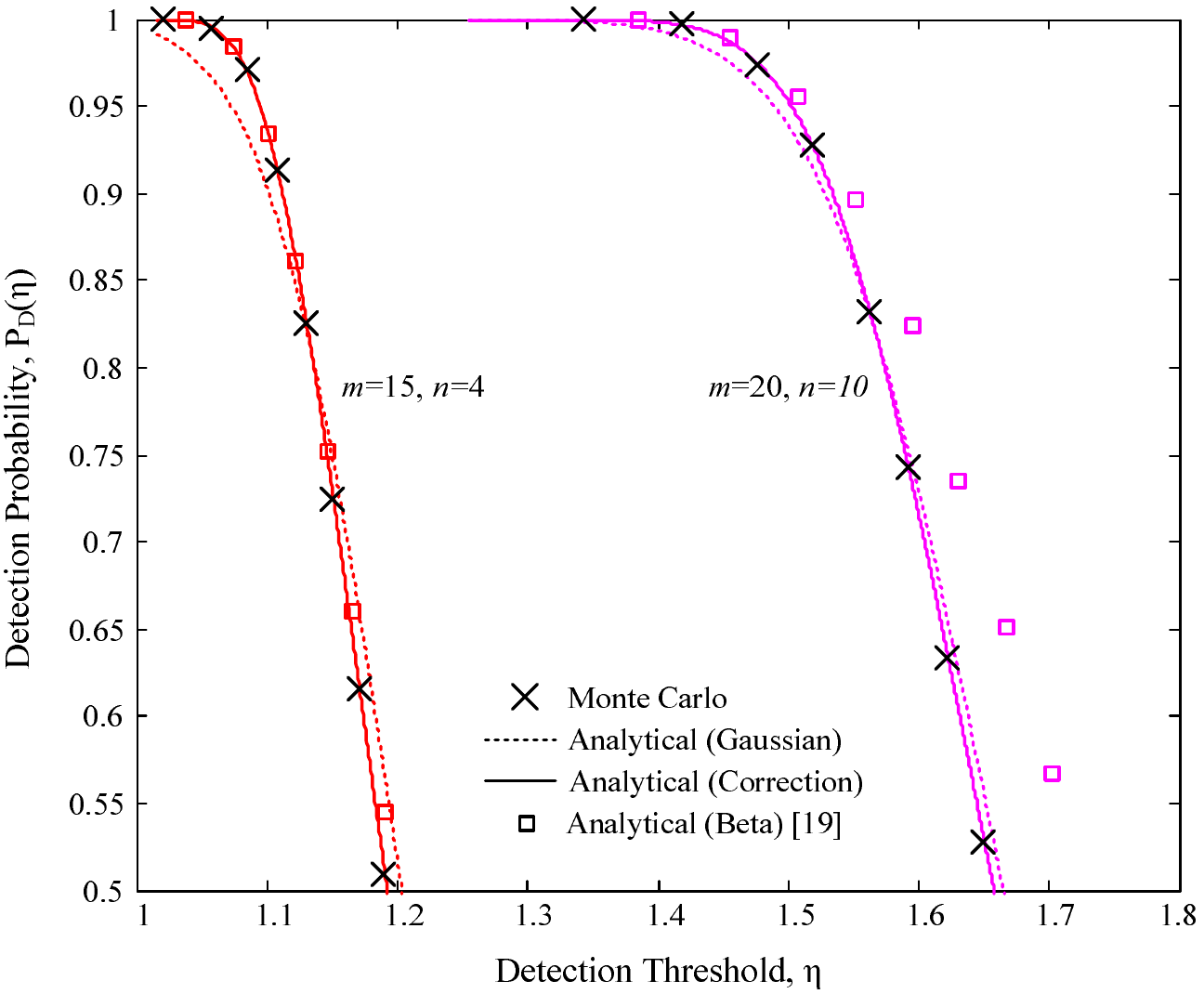}}
\caption{Probability of detection vs.\ detection threshold, with $N_0=5$ and $k=n$.}
\label{fig:correct_detection_cdf}
\end{figure}
\section{Asymptotic Analysis}
\label{sec:asympAnalysis}

The false alarm and detection probabilities can be calculated for arbitrary number of antennas $n$ (provided that $k \geq n$) and observations $m$ using the moment expressions (\ref{eq:iid_fullrank}) and (\ref{eq:semicorr_fullrank}). However, the computation of such expressions, involving $n$-products of Gamma functions, gets rather involved when $n$ and $m$ are large. We are thus motivated in this section to look into more convenient asymptotic expressions for the moments and cumulants, which allow in turn for an efficient computation of the c.d.f. with (moderately) large $n$ and $m$.




\subsection{Moments and Cumulants of $W_0$}
We aim to obtain simple expressions for the cumulants of $W_0$ which, plugged into (\ref{edgeworth}), allow for an efficient computation of the false alarm probability.
Letting $n$ and $m$ be large but finite, we first provide an asymptotic expansion for the moments.

\begin{proposition}\label{lem:expansion_iid}
The $p$th ($p \in \mathbb{Z}^{+}$) moment of $W_0$, with $p < n(m-n+1)$ and $c=\frac{n}{m} \in (0,1)$, admits the expansion
\begin{align}\label{eq:logmellin_iid}
\mu_{W_0,p} = \sum_{j=0}^\infty \frac{ \beta_{p,j}(c)}{n^{2j}}
\end{align}
with $\beta_{p,0}(c) = e^{A_{p,0}(c)}$, and for $j >0$,
\begin{align} \label{muExpansionCoeffs}
\beta_{p,j}(c) =  e^{A_{p,0}(c)} \hspace{-0.5cm} \sum_{i_1+2 i_2+ \ldots +j i_j =j}  \hspace{0.1cm}  \prod_{r=1}^j   \frac{ A_{p,r}(c)^{i_r} } {i_r!}   \; ,
\end{align}
where the sum is taken over the non-negative integer solutions to $i_1+2 i_2 +\ldots+j i_j=j$, and the coefficients $A_{p,q}(c)$ are 
\begin{align} \label{a0}
A_{p,0}(c) &= p-p\left(1-\frac{1}{c}\right) \log(1-c) , \\ \label{a1}
A_{p,1}(c) &= \frac{-p^2}{2}\log(1-c)+\frac{c p(12-11c+6(1-p)(c-1))}{12(c-1)},
\end{align}
and, for $q>1$, 
\begin{align} \label{aq}
A_{p,q}(c) &= \frac{(c p)^q \left(c+12 p-12 c p-c q^2 \right)}{12 c q \left(-1+q^2\right)(1-c)^q } \nonumber \\
&+ \frac{(c p)^q \left(-12 p+c \left(-1+q^2\right)\right) }{12 c q \left(-1+q^2\right)} -\frac{c^q}{q} \, H_{p,-q} \nonumber \\ 
& + \sum _{j=1}^{q-1} \frac{B_{2j+2} \, c^{2j} \, (2j)_{q-j} \, (p c)^{q-j}} {4j \, (j+1) \, (q-j)!} \left(1-(1-c)^{-j-q} \right) ,
\end{align}
where $H_{a,b}$ and $B_k$ are, respectively, the Harmonic numbers and Bernoulli numbers \cite{Abramowitz1970}.
\end{proposition}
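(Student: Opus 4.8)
The plan is to start from the exact formula \eqref{eq:iid_fullrank} of Theorem~\ref{the:semicorr_fullrank}, take its logarithm, substitute $m=n/c$ so that every Gamma argument becomes an explicit function of $n$, and expand via Stirling-type series. The central intermediate claim I would establish is that
\[
\log\mu_{W_0,p} \;=\; A_{p,0}(c) \;+\; \sum_{q=1}^{\infty}\frac{A_{p,q}(c)}{n^{2q}},
\]
i.e. that $\log\mu_{W_0,p}$ has an asymptotic expansion involving only \emph{even} powers of $1/n$, with coefficients precisely the $A_{p,q}(c)$ of \eqref{a0}--\eqref{aq}. Granting this, \eqref{eq:logmellin_iid}--\eqref{muExpansionCoeffs} follow by exponentiating: for any formal series one has $\exp\!\bigl(\sum_{q\ge1}a_q x^q\bigr)=\sum_{j\ge0}x^j\sum_{i_1+2i_2+\cdots+ji_j=j}\prod_{r=1}^{j}a_r^{i_r}/i_r!$, and taking $x=n^{-2}$, $a_q=A_{p,q}(c)$, and the overall factor $e^{A_{p,0}(c)}$ reproduces $\beta_{p,0}(c)=e^{A_{p,0}(c)}$ together with \eqref{muExpansionCoeffs}.

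To obtain the expansion of $\log\mu_{W_0,p}$ I would split the logarithm of \eqref{eq:iid_fullrank} into three pieces. (i) Since $p\in\mathbb{Z}^{+}$, $\Gamma(mn)/\Gamma(mn-p)=\prod_{i=1}^{p}(mn-i)$, so with $mn=n^2/c$ this contributes $p\log(n^2/c)-\sum_{q\ge1}\frac{1}{q}\frac{c^q}{n^{2q}}\sum_{i=1}^{p}i^{q}=p\log(n^2/c)-\sum_{q\ge1}\frac{c^q}{q\,n^{2q}}H_{p,-q}$ (the series converging exactly under the hypothesis $p<n(m-n+1)$), which already supplies the $-\frac{c^q}{q}H_{p,-q}$ terms of $A_{p,q}(c)$ and only even powers of $1/n$. (ii) The term $-p\log n$. (iii) The product of Gamma ratios, which I would rewrite with the Barnes $G$-function using $\prod_{j=0}^{n-1}\Gamma(b+j)=G(b+n)/G(b)$ as
\[
\log\frac{G\!\left(m+1-\tfrac{p}{n}\right)\,G(m-n+1)}{G(m+1)\,G\!\left(m-n+1-\tfrac{p}{n}\right)},
\]
and then insert the classical asymptotic expansion of $\log G$ (its polynomial/logarithmic part plus the tail $\sum_{k\ge1}\frac{B_{2k+2}}{2k(2k+1)(2k+2)}z^{-2k}$, cf.\ \cite{Abramowitz1970}). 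Because $m+1\sim n/c$ and $m-n+1\sim n(1-c)/c$ are both $\Theta(n)$, the Barnes tails produce series in $1/n^{2}$ carrying the $B_{2j+2}$ contributions of \eqref{aq}; the $\zeta'(-1)$ constants cancel across the four $G$-factors, and so do the $\log(2\pi)$ terms.

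After this substitution one must collect terms. The $n^2\log n$, $n^2$, $n\log n$ and $n$ contributions from the four $\log G$-terms cancel (a sanity check forced by $\mu_{W_0,p}=\Theta(1)$), the leftover $\log n$ cancels against piece (ii), and the surviving $\log(1-c)$-, $\log c$- and rational-in-$c$ parts must be checked to assemble into $A_{p,0}(c)$ and $A_{p,1}(c)$ exactly, with the higher orders $q>1$ matching the closed form \eqref{aq}. As a global check, $e^{A_{p,0}(c)}=e^{p}(1-c)^{p(1-c)/c}$, which is $\bigl(e\,(1-c)^{(1-c)/c}\bigr)^{p}$, i.e.\ the $p$th power of the almost-sure limit of $W_0$ predicted by the Marchenko--Pastur law, so the leading coefficient is consistent.

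The main obstacle is the bookkeeping in piece (iii): one has to (a) show that, once the two $G$-arguments $m+1$ and $m-n+1$ are combined with piece (i), all odd powers of $1/n$ vanish --- this is not visible term by term and requires organizing the shift-$(-p/n)$ Taylor expansions of $\log G$ at the two arguments together --- and (b) resum, for each fixed $q>1$, the three distinct sources of $n^{-2q}$ terms (the Harmonic-number piece from (i), the $B_{2k+2}$ tails of the two Barnes expansions, and the shift-$(-p/n)$ Taylor coefficients of the leading $\log G$ terms) and verify they collapse to the single expression \eqref{aq}. I would do this by carrying the $\log\Gamma$ and $\log G$ expansions to order $n^{-2N}$ for general $N$, separating coefficients of $\log(1-c)$ from the rational parts, and identifying the Bernoulli-number pattern; all manipulations are legitimate because the expansions are genuine Poincar\'e asymptotic series, so the infinite sum in \eqref{eq:logmellin_iid} is to be read in the asymptotic sense (equivalently, truncated at any finite order).
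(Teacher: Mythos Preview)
Your proposal is correct and follows essentially the same route as the paper: rewrite the Gamma product via the Barnes $G$-function, insert the large-argument expansion of $\log G$ together with the elementary expansion of $\sum_{j=1}^{p}\log(n^2/c-j)$, collect into $\log\mu_{W_0,p}=\sum_{q\ge0}A_{p,q}(c)\,n^{-2q}$, and then exponentiate via the partition-sum identity to obtain the $\beta_{p,j}(c)$. The only discrepancy is cosmetic---the tail coefficient you quote for $\log G$ should be $B_{2k+2}/\bigl(4k(k+1)\bigr)$ rather than $B_{2k+2}/\bigl(2k(2k+1)(2k+2)\bigr)$---but this does not affect the argument.
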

\begin{proof}
See Appendix \ref{app:asymp_moments}.
\end{proof}


We may now obtain corresponding expansions for the cumulants.

\begin{proposition} \label{lem:cumulant_expansion_iid}
The $p$th ($p \in \mathbb{Z}^{+}$) cumulant of $W_0$, with $p < n(m-n+1)$ and $c=\frac{n}{m} \in (0,1)$, admits the expansion
\begin{align} \label{kappaExpansion}
\kappa_{W_0,p} = \sum_{j=0}^{\infty} \frac{\alpha_{p,j}(c)}{n^{2(p-1+j)}}   \; ,
\end{align} 
where
\begin{align} \label{alphaRecursion}
\alpha_{p,j}(c) &= \beta_{p,p-1+j}(c) \notag \\
& - \sum_{r=1}^{p-1} \binom{p-1}{r-1} \cdot  \sum_{\ell=0}^{p-r+j} \alpha_{r,\ell}(c) \, \beta_{p-r, \, p-r+j-\ell}(c)   \; ,
\end{align}
with $\alpha_{1,j}(c) = \beta_{1,j}(c)$ and $\beta_{p,j}(c)$ given by (\ref{muExpansionCoeffs}) with $\beta_{p,0}(c) = e^{A_{p,0}(c)}$.
\end{proposition}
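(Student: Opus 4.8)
The plan is to feed the moment expansion of Proposition~\ref{lem:expansion_iid} into the standard moment-to-cumulant recursion~(\ref{cumulants_moments}) and to read off~(\ref{alphaRecursion}) by matching powers of $n^{-2}$. A convenient first step is to rewrite Proposition~\ref{lem:expansion_iid} in exponential form: by~(\ref{muExpansionCoeffs}) and the exponential formula, $\sum_{j\ge 0}\beta_{p,j}(c)x^{j}=\exp\!\big(\sum_{q\ge 0}A_{p,q}(c)x^{q}\big)$, i.e.\ $\mu_{W_0,p}=\exp\!\big(\sum_{q\ge 0}A_{p,q}(c)n^{-2q}\big)$. Two structural facts are then read directly from~(\ref{a0})--(\ref{aq}): each $A_{p,q}(c)$ is a polynomial in $p$ of degree at most $q+1$ (the generalized harmonic number $H_{p,-q}=\sum_{k=1}^{p}k^{q}$ being a Faulhaber polynomial of degree $q+1$) and it vanishes at $p=0$; in particular $A_{p,0}(c)=p\,A_{1,0}(c)$, so $\beta_{p,0}(c)=u^{p}$ with $u\defeq e^{A_{1,0}(c)}$. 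Equivalently, the coefficient of $p^{s}$ in $\log\mu_{W_0,p}$ is of order $n^{-2(s-1)}$, i.e.\ $\kappa_{s}(\log W_0)=O(n^{-2(s-1)})$.

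The argument proper is a strong induction on $p$. For $p=1$, (\ref{cumulants_moments}) gives $\kappa_{W_0,1}=\mu_{W_0,1}=\sum_{j\ge 0}\beta_{1,j}(c)n^{-2j}$, so $\alpha_{1,j}(c)=\beta_{1,j}(c)$ and the claimed form holds. In the inductive step, assuming $\kappa_{W_0,r}=\sum_{\ell\ge 0}\alpha_{r,\ell}(c)n^{-2(r-1+\ell)}$ for all $r<p$, one substitutes this and the moment expansions into $\kappa_{W_0,p}=\mu_{W_0,p}-\sum_{r=1}^{p-1}\binom{p-1}{r-1}\kappa_{W_0,r}\mu_{W_0,p-r}$. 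Since $\kappa_{W_0,r}\mu_{W_0,p-r}$ contributes only at orders $n^{-2(r-1+\ell+b)}$ with $\ell,b\ge 0$, equating the exponent to $2(p-1+j)$ forces $\ell+b=p-r+j$, and the coefficient of $n^{-2(p-1+j)}$ in $\kappa_{W_0,p}$ is exactly the right-hand side of~(\ref{alphaRecursion}) --- a finite sum, which we take as the definition of $\alpha_{p,j}(c)$.

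What remains, and what I expect to be the main obstacle, is to check that the coefficient of $n^{-2t}$ in $\kappa_{W_0,p}$ vanishes for $t=0,1,\dots,p-2$, so that the expansion really begins at $n^{-2(p-1)}$; that is, $\kappa_{W_0,p}=O(n^{-2(p-1)})$. The $n^{0}$ term cancels at once because $\beta_{p,0}(c)=u^{p}$, but the higher cancellations do not follow from the recursion alone. The route I would take is a generating-function argument: the exponential form of $\mu_{W_0,p}$ together with the degree bound on $A_{p,q}$ yields $1+\sum_{p\ge 1}\tfrac{\mu_{W_0,p}}{p!}z^{p}=e^{uz}\big(1+\sum_{q\ge 1}n^{-2q}Q_{q}(uz)\big)$ for polynomials $Q_{q}$ with $Q_{q}(0)=0$; taking logarithms gives $\sum_{p\ge 1}\tfrac{\kappa_{W_0,p}}{p!}z^{p}=uz+\sum_{q\ge 1}n^{-2q}R_{q}(uz)$, and then $[z^{p}]$ of the right-hand side is $O(n^{-2(p-1)})$ as soon as $\deg R_{q}\le q+1$ for every $q$. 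Proving this last degree bound is the technical heart: the naive estimate $\deg Q_{q}\le 2q$ is too weak, and one must track the cancellations among the leading terms of the $Q_{q}$ that occur upon forming the logarithm --- these are precisely the Edgeworth-type cancellations reflecting that $\log W_{0}$ is, for large $n$ at fixed $c$, an asymptotically Gaussian linear spectral statistic with $\kappa_{s}(\log W_{0})=O(n^{-2(s-1)})$. (Alternatively, $\kappa_{W_0,p}=O(n^{-2(p-1)})$ can be imported from known refined central-limit expansions for smooth linear eigenvalue statistics of complex Wishart matrices.) Throughout, the $n^{-2}$-series are asymptotic expansions in which only finitely many terms contribute at each order, and the bound $p<n(m-n+1)$ of Theorem~\ref{the:semicorr_fullrank} holds for any fixed $p$ once $n$ is large with $c=n/m$ fixed, so the term-by-term manipulations are legitimate.
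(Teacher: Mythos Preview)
Your core approach---substitute the moment expansion of Proposition~\ref{lem:expansion_iid} into the recursion~(\ref{cumulants_moments}) and match powers of $n^{-2}$---is exactly what the paper does; its entire proof is the single sentence ``Follows by substituting (\ref{eq:logmellin_iid}) into (\ref{cumulants_moments}) and rearranging terms in the resultant series.'' Your inductive derivation of~(\ref{alphaRecursion}) reproduces this.

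Where you go further than the paper is in isolating the real content of the statement: that the expansion of $\kappa_{W_0,p}$ \emph{begins} at order $n^{-2(p-1)}$, i.e.\ that the coefficients at orders $n^{0},n^{-2},\dots,n^{-2(p-2)}$ cancel. The paper's one-line proof does not address this, and you are right that it does not follow from the recursion alone. Your structural observation---that each $A_{p,q}(c)$ is a polynomial in $p$ of degree at most $q+1$ vanishing at $p=0$, with $A_{p,0}(c)$ linear---is precisely the input needed, since $\log\mu_{W_0,p}$ is the cumulant generating function of $\log W_0$ at the point $p$, whence $\kappa_{s}(\log W_0)=O(n^{-2(s-1)})$. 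Transferring this to $\kappa_{p}(W_0)=O(n^{-2(p-1)})$ via your generating-function route (or by writing $W_0=e^{\log W_0}$ and using the Fa\`a di Bruno relations between cumulants of a variable and of its exponential) is workable but, as you anticipate, requires care in tracking degrees through the logarithm; the naive bound $\deg Q_q\le 2q$ is indeed insufficient, and the cancellation comes from the fact that the super-linear part of $A_{p,q}$ is itself of order $n^{-2}$ relative to the linear part. Your alternative of importing the order from CLT expansions for linear spectral statistics of complex Wishart matrices is also legitimate and arguably cleaner. In short, your proposal is more rigorous than the paper's own proof, and the obstacle you flag is a genuine one that the paper simply leaves implicit.
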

\begin{proof}
Follows 
by substituting (\ref{eq:logmellin_iid}) into (\ref{cumulants_moments}) and rearranging terms in the resultant series.  
\end{proof}


For large (but finite) $n$ and $m$, and $c=\frac{n}{m} \in (0,1)$, the leading-order term $\alpha_{p,0}(c)$ dominates the $p$th cumulant, giving
\begin{align} \label{kappaApprox}
\kappa_{W_0,p} \approx \frac{\alpha_{p,0}(c)}{n^{2(p-1)}}.
\end{align}
Using (\ref{alphaRecursion}) to obtain $\alpha_{p,0}(c)$ for the first four cumulants yields
\begin{align} \label{k1}
\kappa_{W_0,1} & \approx a_1^{-a_3} \, e \\ \label{k2} 
\kappa_{W_0,2} & \approx \frac{-e^2}{n^2} a_1^{-2a_3}  \, \left[c + a_2 \right] \\
\label{k3}
\kappa_{W_0,3} & \approx \frac{-e^3}{n^4} a_1^{-3a_3-1} \, 
 \left[ c^2 (2c-3)- 6 c  a_1 a_2 - 3 a_1 a_2^2 \right] \\
\label{k4}
\kappa_{W_0,4} & \approx \frac{-e^4}{n^6} a_1^{-4a_3-2} \,   \left[ c^3 (16 + c (6c-23)) \right. \nonumber \\
& \left. - 12 a_1 c^2 (3c-4) a_2 +  48 c  a_1^2 a_2^2 + 16 a_1^2 a_2^3 \right] ,
\end{align}
where $a_1=1-c$, $a_2=\ln{a_1}$, and $a_3=1-\frac{1}{c}$.

Note that the leading-order cumulant approximations in (\ref{k1})-(\ref{k4}) are much simpler to compute than the exact cumulants,  especially when $n$ is large. The false alarm probability can thus be efficiently computed via the approximated cumulants by plugging (\ref{k1})-(\ref{k4}) into (\ref{eq:cdf_W01}). To see the accuracy of such an approximation, we compare it with the same c.d.f. expression (\ref{eq:cdf_W01}) computed with the exact cumulants. This comparison is shown in Fig. \ref{fig:false_alarm_cdf_asymptotic}, where we observe that the difference between the ``asymptotic cumulants" and the ``exact cumulants" curves is indistinguishable even for $n$ as low as 4.

\begin{figure}[t]
\centerline{\includegraphics[width=0.85\columnwidth]{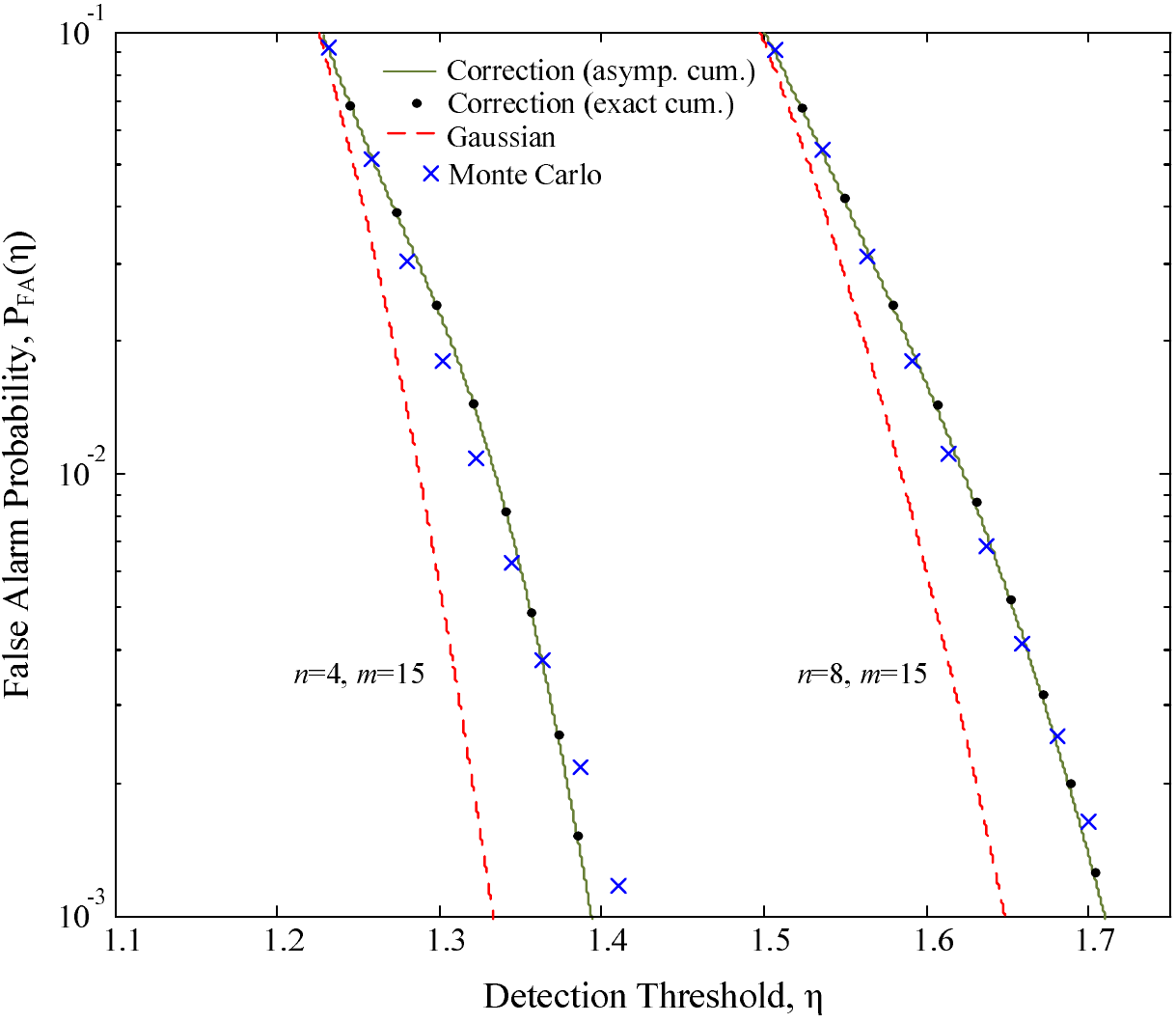}}
\caption{Probability of false alarm vs.\ detection threshold, with $k=n$.}
\label{fig:false_alarm_cdf_asymptotic}
\end{figure}


\begin{remark}[The case $c=1$]
Note that the expansions for the moments and the cumulants, as given in Propositions \ref{lem:expansion_iid} and \ref{lem:cumulant_expansion_iid}, are not valid when $c=1$. For this particular case, different expansions involving all powers of $n^{-1}$ are obtained in Appendix \ref{app:asymp_moments_c1} and provide analytic continuation to the aforementioned propositions. Remarkably, the leading-order approximation to the $p$th cumulant, $p < n(m-n+1)$, when $c=1$ is found to be
\begin{align} \label{cumulants_c1}
\kappa_{W_0,1} & \approx e \\
\kappa_{W_0,2} & \approx \frac{1}{n^2} e^2 \, (\mathcal{C}+\log{n}) \\
\kappa_{W_0,p} & \approx \frac{1}{n^p} e^p \, (p-1)! \, \zeta(p-1) , \quad p>2,
\end{align}
where $\mathcal{C}$ is the Euler-Gamma constant \cite[(8.367)]{Gradshteyn1965} and $\zeta(\cdot)$ is the Riemann-Zeta function \cite[(9.5)]{Gradshteyn1965}. This result, albeit practically less meaningful than for the case $c \neq 1$, may still be of theoretical interest in other contexts. 
\end{remark}

\subsection{Gaussian Convergence of $W_0$}

We now show the asymptotic convergence of $W_0$ to a Gaussian distribution.

\begin{theorem}\label{the:asymptotic_cdf}
Let $n \to \infty$ with  $\frac{n}{m} \to \bar c \in (0,1)$. Then\footnote{$x \overset{d}{\to} y$ implies $x$ converges in distribution to $y$.} 
\begin{align}\label{eq:asymoptotic_iid}
n\left(W_0 - \bar{\mu}\right) \overset{d}{\to} \mathcal{N}\left(0, \bar{\sigma}^2 \right)
\end{align}
where
\begin{align}
\bar{\mu} 
&= e^{} (1-\bar c)^{\frac{1-\bar c}{\bar c}} ,
\end{align}
and
\begin{align}
\bar{\sigma}^2 
&=  e^{  2} (1- \bar c)^{\frac{2(1- \bar c)}{\bar c }} \left(\ln\left(\frac{1}{1- \bar c}\right)-\bar c \right) .
\end{align}
\end{theorem}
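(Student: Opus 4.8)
The plan is to establish the central limit theorem in \eqref{eq:asymoptotic_iid} by combining the moment/cumulant expansions of Propositions~\ref{lem:expansion_iid} and~\ref{lem:cumulant_expansion_iid} with the method of moments (equivalently, the method of cumulants). First I would verify that $\bar\mu$ coincides with the leading-order term of the first moment: from \eqref{a0}, $A_{p,0}(c) = p - p(1-1/c)\log(1-c)$, so $\beta_{1,0}(c) = e^{A_{1,0}(c)} = e\,(1-c)^{-(1-1/c)} = e\,(1-c)^{(1-c)/c}$, which gives $\mu_{W_0,1} \to \bar\mu$ as $n\to\infty$, $c\to\bar c$. Similarly, from \eqref{k2} with $a_1 = 1-\bar c$, $a_2 = \ln(1-\bar c)$, $a_3 = 1-1/\bar c$, the leading term of $\kappa_{W_0,2}$ is $\alpha_{2,0}(\bar c)/n^2$ with $\alpha_{2,0}(\bar c) = -e^2 (1-\bar c)^{-2a_3}(\bar c + \ln(1-\bar c)) = e^2(1-\bar c)^{2(1-\bar c)/\bar c}\big(\ln\frac{1}{1-\bar c} - \bar c\big)$, so $n^2\,\mathrm{Var}(W_0) \to \bar\sigma^2$. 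Hence the candidate random variable $Y_n := n(W_0 - \bar\mu)$ has $\mathrm{E}[Y_n] = n(\mu_{W_0,1} - \bar\mu) \to 0$ (one must check the rate of convergence $\mu_{W_0,1} - \bar\mu$ is $o(1/n)$, which follows since the next correction term in \eqref{eq:logmellin_iid} is $O(n^{-2})$ and the difference between $\beta_{1,0}(c)$ and $\beta_{1,0}(\bar c)$ is controlled by the rate at which $n/m \to \bar c$) and $\mathrm{Var}(Y_n) \to \bar\sigma^2$.

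The core of the argument is to show that all higher cumulants of $Y_n$ vanish in the limit. By the homogeneity/scaling of cumulants, $\kappa_{Y_n, p} = n^p\,\kappa_{W_0, p}$ for $p \ge 2$ (the shift by $\bar\mu$ only affects the first cumulant). Proposition~\ref{lem:cumulant_expansion_iid} gives $\kappa_{W_0,p} = \alpha_{p,0}(c)/n^{2(p-1)} + O(n^{-2p})$, so
\begin{align}\label{eq:Ycumulants}
\kappa_{Y_n,p} = n^p \kappa_{W_0,p} = \frac{\alpha_{p,0}(c)}{n^{p-2}} + O(n^{-p}) \longrightarrow 0 \quad \text{for } p \ge 3,
\end{align}
while $\kappa_{Y_n,2} \to \alpha_{2,0}(\bar c) = \bar\sigma^2$ and $\kappa_{Y_n,1} \to 0$. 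Since a Gaussian $\mathcal{N}(0,\bar\sigma^2)$ is the unique distribution with cumulants $(0,\bar\sigma^2,0,0,\ldots)$, and since the normal distribution is determined by its moments (Carleman's condition is trivially satisfied), convergence of all cumulants implies convergence in distribution. One should also confirm that $\alpha_{p,0}(c)$ stays bounded as $c\to\bar c$ with $\bar c \in (0,1)$ bounded away from the endpoints, which is immediate from the explicit form of the $A_{p,q}$ in \eqref{a0}--\eqref{aq} (all terms are analytic in $c$ on $(0,1)$).

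The main obstacle — and the place requiring genuine care rather than routine bookkeeping — is making the method-of-cumulants conclusion rigorous given that the moment expansion \eqref{eq:logmellin_iid} and the cumulant expansion \eqref{kappaExpansion} were stated only formally (as asymptotic series) and, crucially, only for fixed $p$ with the constraint $p < n(m-n+1)$. For the method of moments one in principle needs control of $\kappa_{Y_n,p}$ for \emph{every} fixed $p$ as $n\to\infty$; since $n(m-n+1) \to \infty$ for any fixed $\bar c \in (0,1)$, the constraint $p < n(m-n+1)$ is eventually satisfied for each fixed $p$, so this is not a real obstruction but must be stated. More delicate is justifying that the error term in truncating \eqref{eq:logmellin_iid} after the leading term is genuinely $O(n^{-2})$ uniformly enough to push through \eqref{eq:Ycumulants}; this rests on the analysis in Appendix~\ref{app:asymp_moments}, from which one extracts that for fixed $p$ the remainder $\mu_{W_0,p} - \beta_{p,0}(c) - \beta_{p,1}(c)/n^2$ is $O(n^{-4})$ with constants depending only on $p$ and on a compact $c$-neighbourhood of $\bar c$. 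An alternative, arguably cleaner route that I would mention as a fallback is a direct log-characteristic-function (Mellin transform) argument: writing $W_0$ in terms of the eigenvalues of a complex Wishart matrix, the cumulant generating function of $\log W_0$ has an exact form via the Selberg/Mellin integral, and a Laplace-type expansion of that integral directly yields $\log\mathrm{E}[e^{t n(W_0-\bar\mu)}] \to \tfrac12 \bar\sigma^2 t^2$, which gives the CLT by Lévy continuity; this bypasses the need to track infinitely many cumulants but requires the asymptotic analysis of the governing integral, which is essentially the content of Appendix~\ref{app:asymp_moments} anyway.
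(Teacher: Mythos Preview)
Your proposal is correct and follows essentially the same route as the paper: both argue via homogeneity of cumulants that $\kappa_{n(W_0-\bar\mu),p}=n^p\kappa_{W_0,p}$ for $p\ge 2$, invoke the expansion \eqref{kappaExpansion} to conclude that these vanish for $p\ge 3$ while $n^2\kappa_{W_0,2}\to\bar\sigma^2$ from \eqref{k2}, and then deduce Gaussianity. The paper's proof is in fact considerably terser than yours---it does not spell out the centering check $n(\mu_{W_0,1}-\bar\mu)\to 0$, the eventual validity of the constraint $p<n(m-n+1)$, the uniformity of the remainder, or the moment-determinacy point---so your additional care is welcome rather than superfluous.
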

\begin{proof}
See Appendix \ref{app:asymptotic_cdf}.
\end{proof}

The result above confirms what was empirically observed in \cite{ray2014}, i.e., the distribution of $W_0$ approaches a Gaussian as the number of antennas $n$ and observations $m$ grow large with a fixed ratio. Motivating examples for the large $n$ regime can be found when considering a secondary system built on a massive antenna array setup, e.g., a secondary (non-licensed) base station with a very large number of antennas. On the other hand, there are applications where $m$ can be very large. For TV channels, for example, the IEEE 802.22 standard \cite{IEEE802} specifies the sensing estimation to be performed at speeds of under $1$ ms per channel, which along with a typical sampling frequency of $10$ MHz yield a maximum of $m = 10000$ samples to sense a given channel. Moreover, the cellular LTE standard defines a subframe as a transmission time interval (TTI) of $1$ ms \cite{Furht2009} and, therefore, the required sensing time should be well below the TTI (say, $1/10$ ms) in order to maximize the time available for data transmission. This results in $m = 1500$ samples considering a bandwidth configuration of $10$ MHz and sampling frequency of $15$ MHz.

\subsection{Asymptotic Analysis of $W_1$}
\label{sec:asymptoticW1}

Here, we first aim to obtain simple expressions for the cumulants of $W_1$ which, plugged into (\ref{edgeworth}), allow for an efficient computation of the detection probability for large (but finite) $n$ and $m$. Recall that $W_1$ is the GLRT statistic under the hypothesis of transmitted PU signals being present. In this case, as shown by (\ref{eq:semicorr_fullrank}), the moments (and therefore the cumulants) involve the eigenvalues of $\mathbf{H} \mathbf{H}^\dagger + \mathbf{I}_n N_0$. Hence, it is convenient to first define certain functions of these eigenvalues.

\begin{definition} \label{def:psi}
Let $\{y_1, y_2, \ldots, y_n\}$ denote the eigenvalues of $\mathbf{H} \mathbf{H}^\dagger + \mathbf{I}_n N_0$, with $\mathbf{H} \mathbf{H}^\dagger$ positive definite, then
\begin{align} \label{psi1_def}
\Psi_1 & \triangleq \prod_{i=1}^n y_i^{-\frac{1}{n}} \\ \label{psiL_def}
\Psi_{\ell + 1} & \triangleq \frac{1}{n}\sum_{i=1}^n y_i^\ell , \quad \ell \geq 1, 
\end{align}
with $\Psi_1 \in \left( 0, N_0^{-1} \right)$, $\Psi_{\ell + 1} \in \left( N_0,\infty \right)$ .
\end{definition}

In the next proposition, we provide simple expressions for the cumulants of $W_1$ for large $n$ and $m$. Note that $\mathbf{H}$ is assumed to be constant over the sensing time (i.e., constant over the $m$ samples), which may seem contradictory in the large $m$ regime. However, as discussed above, the sensing time (yielding very large values of $m$ in the aforementioned examples) is required to be well below the subframe duration, which is usually designed to be shorter than the typical channel coherence time.


\begin{proposition}\label{lemm:semi_variance}
For large (but finite) $n$ and $m$, and $c=\frac{n}{m} \in (0,1)$, the first three cumulants of $W_1$ are given by 
\begin{align} \label{eq:asympMeanW1}
\kappa_{W_1,1} &=  e (1-c)^{\frac{1-c}{c}}  \Psi_1 \Psi_2 +O\left(\frac{1}{n^2}\right) \\
\kappa_{W_1,2} &=  \frac{1}{n^2} \Psi_1^2  e^2 (1-c)^{2 \frac{1-c}{c}} \notag \\
&  \times \left[  \left(  \ln \left(\frac{1}{1-c}\right)-2c\right) \Psi_2^2 + c \Psi_3 \right] +O\left(\frac{1}{n^4}\right) \label{eq:varW1} \\
\kappa_{W_1,3} &= \frac{1}{n^4} \Psi_1^3  e^3 (1-c)^{3 \frac{1-c}{c}} \notag \\
& \hspace{-3mm} \times \left[  \left(\frac{c^2 (-10+9 c)}{-1+c}+3 \ln(1-c) (4 c+\ln(1-c))\right) \Psi_2^3 \right. \notag \\
& \hspace{-3mm} -\Psi_2\Psi_3 3c(3 c+2 \ln(1-c))+\Psi_4 2c^2 \bigg] +O\left(\frac{1}{n^6}\right), \label{eq:k3W1}
\end{align}
with $\Psi_{\ell}$ given in Definition \ref{def:psi}.
\end{proposition}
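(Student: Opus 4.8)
The plan is to reduce the exact moment formula (\ref{eq:semicorr_fullrank}) to the product of a $W_0$-moment and an explicit, channel-dependent ``correction'', to which the large-$(n,m)$ expansion of Proposition~\ref{lem:expansion_iid} applies almost verbatim. First note that the product $\prod_{j=0}^{n-1}\Gamma(m-n+1-\frac{p}{n}+j)/\Gamma(m-n+1+j)$ appears in \emph{both} (\ref{eq:iid_fullrank}) and (\ref{eq:semicorr_fullrank}); solving (\ref{eq:iid_fullrank}) for this product and substituting into (\ref{eq:semicorr_fullrank}) gives the exact identity
\begin{align*}
\mu_{W_1,p} &= \frac{p!\,\Gamma(mn-p)}{\Gamma(mn)}\,\Psi_1^{p}\,\mu_{W_0,p}\,S_p, \\
S_p &\triangleq \sum_{k_1+\cdots+k_n=p}\prod_{i=1}^n \frac{\Gamma(m-\frac{p}{n}+k_i)\,y_i^{k_i}}{\Gamma(k_i+1)\,\Gamma(m-\frac{p}{n})},
\end{align*}
with $\Psi_1,\Psi_\ell$ as in Definition~\ref{def:psi}, so that all channel dependence beyond $\Psi_1^p$ is concentrated in $S_p$.

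Second, I would evaluate $S_p$ for $p=1,2,3$. Since $\Gamma(m-\frac{p}{n}+k)/\Gamma(m-\frac{p}{n})=\prod_{r=0}^{k-1}(m-\frac{p}{n}+r)$ is a polynomial in $m$, every composition contributes a monomial in the $y_i$; grouping compositions by the integer partition of $p$ they realise (partitions $\{1\}$; $\{2\},\{1,1\}$; $\{3\},\{2,1\},\{1,1,1\}$) turns $S_p$ into a combination of the elementary symmetric polynomials $e_1(y),\dots,e_p(y)$, which Newton's identities rewrite in the power sums $\sum_i y_i^\ell=n\Psi_{\ell+1}$. For $p=1$ one gets $S_1=(mn-1)\Psi_2$, which the prefactor $\Gamma(mn-1)/\Gamma(mn)=1/(mn-1)$ cancels, so $\mu_{W_1,1}=\Psi_1\Psi_2\,\mu_{W_0,1}$ exactly; for $p=2$ one similarly obtains the exact relation $\mu_{W_1,2}=\Psi_1^2\mu_{W_0,2}\bigl[\frac{mn-2}{mn-1}\Psi_2^2+\frac{1}{mn-1}\Psi_3\bigr]$, and $\mu_{W_1,3}$ comes out as $\Psi_1^3\mu_{W_0,3}$ times an explicit rational-in-$mn$ combination of $\Psi_2^3,\Psi_2\Psi_3,\Psi_4$. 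I should flag that within $S_p$ the different partitions contribute $O(m^2 n)$-sized pieces to a fixed monomial (e.g.\ to $\Psi_3$ when $p=2$) that cancel down to $O(mn)$, and this cancellation is precisely what places the correction terms at the order displayed in the proposition.

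Third, I would substitute $\Gamma(mn-p)/\Gamma(mn)=\prod_{j=1}^p(mn-j)^{-1}$ with $mn=n^2/c$, together with $\mu_{W_0,p}=\beta_{p,0}(c)+\beta_{p,1}(c)/n^2+O(n^{-4})$ from Proposition~\ref{lem:expansion_iid}, using $\beta_{p,0}(c)=e^{A_{p,0}(c)}=\bar\mu^{p}$ with $\bar\mu=e(1-c)^{(1-c)/c}$ (from (\ref{a0})), and the fact, recorded via (\ref{alphaRecursion})--(\ref{k4}), that $\beta_{2,1}-2\beta_{1,0}\beta_{1,1}=\alpha_{2,0}=\bar\mu^2(\ln\frac{1}{1-c}-c)$, with an analogous identity for $\alpha_{3,0}$. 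Collecting powers of $1/n$ gives $\mu_{W_1,2}$ to $O(n^{-2})$ and $\mu_{W_1,3}$ to $O(n^{-4})$; feeding these into $\kappa_{W_1,1}=\mu_{W_1,1}$, $\kappa_{W_1,2}=\mu_{W_1,2}-\mu_{W_1,1}^2$, $\kappa_{W_1,3}=\mu_{W_1,3}-3\mu_{W_1,1}\mu_{W_1,2}+2\mu_{W_1,1}^3$ (from (\ref{cumulants_moments})) yields (\ref{eq:asympMeanW1})--(\ref{eq:k3W1}). For instance, in $\kappa_{W_1,2}$ the $O(1)$ part $\bar\mu^2(\Psi_1\Psi_2)^2$ cancels and what remains is $\frac{\Psi_1^2}{n^2}\bigl[(\alpha_{2,0}-c\bar\mu^2)\Psi_2^2+c\bar\mu^2\Psi_3\bigr]=\frac{\Psi_1^2\bar\mu^2}{n^2}\bigl[(\ln\frac{1}{1-c}-2c)\Psi_2^2+c\Psi_3\bigr]$, i.e.\ (\ref{eq:varW1}).

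The main obstacle is the bookkeeping forced by these cancellations: every term in the statement is a residue left after the leading orders vanish (as they must, since $W_1$ concentrates), so one must carry three simultaneous sources of subleading terms --- the $\Gamma(mn-p)/\Gamma(mn)$ prefactor, the $\mu_{W_0,p}$ expansion of Proposition~\ref{lem:expansion_iid}, and the partition structure of $S_p$ --- consistently to the same order. This is already fiddly for $\kappa_{W_1,2}$ and becomes genuinely laborious for $\kappa_{W_1,3}$, where the $O(n^{-2})$ terms must \emph{also} cancel, forcing $\mu_{W_0,p}$ and the rational prefactors to be expanded to $O(n^{-4})$ and all three partitions $\{3\},\{2,1\},\{1,1,1\}$ of $S_3$ to be retained; verifying that these cancellations indeed occur and collapse to the compact form in (\ref{eq:k3W1}) is the real work.
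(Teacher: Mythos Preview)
Your proposal is correct and follows essentially the same route as the paper's proof in Appendix~\ref{app:semi_variance}: both factor $\mu_{W_1,p}$ as $\Psi_1^p\mu_{W_0,p}$ times a channel-dependent correction built from the compositional sum, rewrite that correction in terms of the power sums $\Psi_2,\Psi_3,\Psi_4$ via symmetric-function identities (your Newton's identities are the paper's relations such as~(\ref{relationM2})), expand in $1/n$ using Proposition~\ref{lem:expansion_iid} together with $(mn-j)^{-1}=c/n^2+O(n^{-4})$, and then pass to cumulants via~(\ref{cumulants_moments}). Your explicit packaging $\mu_{W_1,p}=\dfrac{p!\,\Gamma(mn-p)}{\Gamma(mn)}\,\Psi_1^{p}\,\mu_{W_0,p}\,S_p$ is a tidy way to record the same factorisation the paper uses implicitly.
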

\begin{proof}
See Appendix \ref{app:semi_variance}.
\end{proof}
Although only the first three cumulants are given in Proposition \ref{lemm:semi_variance}, higher order cumulants can be derived by following the same approach. However, such derivations become more tedious as the cumulant order increases. Nevertheless, as shown later in this paper, the first three cumulants are enough for an accurate computation of the detection probability.

Plugged into (\ref{edgeworth}), these cumulants yield the approximate detection probability for a given PU-SU channel realization $\mathbf{H}$, which determines $\Psi_{\ell}$ in Proposition \ref{lemm:semi_variance}. In practice, $\mathbf{H}$ is a random communication channel, and thus the detection probability can be seen as a random function of $\mathbf{H}$. It turns out, however, that as $n$ and $k$ grow large, this function converges to a deterministic value, which depends only on some statistical properties of $\mathbf{H}$, but not on its specific distribution. 




\emph{Deterministic Equivalents for $\Psi_\ell$}:
Note from (\ref{psi1_def})-(\ref{psiL_def}) that
\begin{align} \label{eq:psiLL}
\Psi_1 &= {\rm det}^{-\frac{1}{n}}\left(\mathbf{H} \mathbf{H}^\dagger + \mathbf{I}_n N_0 \right) \; , \notag \\ 
\Psi_{\ell +1} &=\frac{1}{n} {\rm Tr}\left(\left(\mathbf{H} \mathbf{H}^\dagger + \mathbf{I}_n N_0 \right)^{\ell} \right) , \quad \ell \geq 1 ,
\end{align}
for which we aim to find deterministic equivalents $\bar\Psi_{\ell}$ such that\footnote{$x \overset{a.s.}{\to} y$ implies that $x$ converges almost surely to $y$.} 
\begin{align} \notag
| \Psi_\ell - \bar \Psi_\ell | \overset{a.s.}{\to} 0  \quad \mathrm{as} \,\,\,  n \to \infty  \,\,\, \mathrm{with} \,\,\, \frac{k}{n} \to \beta > 1,
\end{align}
where the deterministic equivalent $\bar\Psi_{\ell}$ does not depend on the PU-SU channel realization $\mathbf{H}$, but only on its statistical properties. To that end, we first state the following assumption. 

\emph{Assumption 1:}
The empirical distribution of the eigenvalues of an $n \times n$ Hermitian matrix $\mathbf{H H^{\dagger}}$, denoted by\footnote{$\mathbf{1}\{ E \}$ denotes the indicator function of an event $E$.} ${\rm{F}}_\mathbf{H H^{\dagger}}^{(n)}(x) = \frac{1}{n} \sum_{i=1}^n \mathbf{1}\{ y_i \leq x \}$, satisfies
\begin{align}\label{eq:H_limit}
{\rm{F}}_\mathbf{H H^{\dagger}}^{(n)}(x) \overset{a.s.}{\to} {\rm{F}}_{\beta}(x) \; ,\quad \forall x \in \mathbb{R} - \{0\}
\end{align}
as $n \to \infty$ with $ \frac{k}{n} \to \beta$, and ${\rm{F}}_{\beta}(x)$ commonly referred to as the asymptotic spectrum with density denoted by $f_{\beta}(x)$.

Under the above assumption, commonly adopted in large random matrix theory (see e.g.\ \cite{ledoit2013,tulino2004}), the limiting quantities $\bar\Psi_{\ell}$ are closely related to some asymptotic results which we invoke next.

Let us first connect $\Psi_1$ with the mutual information of the PU-SU channel through
\begin{align} \label{psi1_mi}
\Psi_1 &= \frac{1}{N_0} \exp{\left( -\frac{1}{n} \, \mathcal{I}_{\mathbf{H}}\left(\frac{1}{N_0}\right) \right)} ,
\end{align}
where
\begin{align} \label{mi}
\mathcal{I}_{\mathbf{H}} \left(\frac{1}{N_0}\right) = \log {\rm det} \left( \mathbf{I}_n + \frac{1}{N_0} \mathbf{H} \mathbf{H}^\dagger \right) 
\end{align}
is the mutual information of ${\mathbf{H}}$ with average SNR $1/N_0$. The asymptotics of this quantity have been extensively studied in information theory under different assumptions on the statistics of ${\mathbf{H}}$ (see, e.g., \cite{Moustakas2003,Chen2012,Kazakopoulos2011,tulino2004,Hachem2008}). In view of (\ref{psi1_mi}), the existing asymptotic results for (\ref{mi}) can be directly translated into the corresponding limiting value\footnote{Note that the limiting value of (\ref{mi}) yields the limiting value of (\ref{psi1_mi}) due to smoothness of the exponential function.} $\bar \Psi_1$.

Let us now turn our attention to $\Psi_{\ell +1}$, $\ell \geq 1$, which can be related to the so-called ``moments'' of $\mathbf{H} \mathbf{H}^\dagger$ as shown in the following lemma.

\begin{lemma}\label{lemm:psi}
The convergence values $\bar\Psi_{\ell +1}$, $\ell \geq 1$, are given by
\begin{align} \label{eq:psi}
\bar\Psi_{\ell +1} = N_0^{\ell} + \sum_{r=1}^\ell \binom{\ell}{r} N_0^{\ell -r} \mathcal{M}_r ,
\end{align} 
where $\mathcal{M}_r$ is the $r$th moment: 
\begin{align} \label{eq:Jr}
\mathcal{M}_r= \lim_{ \begin{subarray}{c} n \to \infty \\
\frac{k}{n} \to \beta \end{subarray}} \frac{1}{n} {\rm Tr} \left( \left( \mathbf{H} \mathbf{H}^\dagger \right)^r \right) .
\end{align} 
\end{lemma}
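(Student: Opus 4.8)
The plan is to reduce the statement to an elementary binomial identity followed by a term-by-term passage to the limit. First I would observe that, by (\ref{eq:psiLL}), $\Psi_{\ell+1} = \frac{1}{n}\,{\rm Tr}\bigl((\mathbf{H}\mathbf{H}^\dagger + N_0\mathbf{I}_n)^{\ell}\bigr)$, and since $N_0\mathbf{I}_n$ commutes with $\mathbf{H}\mathbf{H}^\dagger$ the matrix power expands by the binomial theorem as $\sum_{r=0}^{\ell}\binom{\ell}{r} N_0^{\ell-r}(\mathbf{H}\mathbf{H}^\dagger)^r$. Taking the normalized trace of both sides and separating the $r=0$ term, which equals $N_0^{\ell}\cdot\frac{1}{n}{\rm Tr}(\mathbf{I}_n)=N_0^{\ell}$, yields the exact (finite-$n$) identity
\begin{align} \notag
\Psi_{\ell+1} = N_0^{\ell} + \sum_{r=1}^{\ell}\binom{\ell}{r} N_0^{\ell-r}\,\frac{1}{n}\,{\rm Tr}\bigl((\mathbf{H}\mathbf{H}^\dagger)^r\bigr) .
\end{align}

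Next I would let $n\to\infty$ with $\frac{k}{n}\to\beta$. Since the $r$-sum is finite and the coefficients $\binom{\ell}{r}N_0^{\ell-r}$ are deterministic constants, it suffices to take the limit inside the sum. Writing $\frac{1}{n}{\rm Tr}\bigl((\mathbf{H}\mathbf{H}^\dagger)^r\bigr) = \int x^r\, d{\rm F}^{(n)}_{\mathbf{H}\mathbf{H}^{\dagger}}(x)$, Assumption 1 (convergence of the empirical spectral distribution, see (\ref{eq:H_limit})) gives that this quantity converges almost surely to the $r$th moment of the limiting spectrum, which is precisely $\mathcal{M}_r$ as defined in (\ref{eq:Jr}). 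Because a finite linear combination of almost-surely convergent sequences converges almost surely to the corresponding linear combination of the limits, this gives $|\Psi_{\ell+1}-\bar\Psi_{\ell+1}|\overset{a.s.}{\to}0$ with $\bar\Psi_{\ell+1}=N_0^{\ell}+\sum_{r=1}^{\ell}\binom{\ell}{r}N_0^{\ell-r}\mathcal{M}_r$, which is the claim.

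The one step I would take care to justify is the passage from weak convergence of the spectral measure to convergence of its moments, since weak convergence alone does not in general entail this. Here it does hold: for the channel models of interest $\mathbf{H}\mathbf{H}^\dagger$ has an asymptotically bounded spectral support, so the integrand $x^r$ is effectively bounded and bounded convergence applies; alternatively, one may simply read (\ref{eq:Jr}) as the \emph{definition} of $\mathcal{M}_r$, namely the almost-sure limit of $\frac{1}{n}{\rm Tr}((\mathbf{H}\mathbf{H}^\dagger)^r)$ (assumed to exist), in which case the lemma is nothing more than the displayed bookkeeping identity together with linearity of limits. I anticipate no further obstacle; the remaining steps are direct.
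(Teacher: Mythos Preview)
Your proof is correct and follows the same approach as the paper, which simply states that the lemma ``follows straightforwardly from (\ref{eq:psiLL}) by using the binomial expansion.'' If anything, you supply more detail than the paper on the passage to the limit, but the core argument is identical.
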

\begin{proof}
Follows straightforwardly from (\ref{eq:psiLL}) by using the binomial expansion. 
\end{proof}

Both the mutual information and $\mathcal{M}_r$
have been broadly studied for $n, k \to \infty$ in the context of information theory and large random matrix theory under different assumptions on the statistics of ${\mathbf{H}}$. The rich body of existing results allows for computing the limiting values $\bar \Psi_{\ell}$ for a broad number of scenarios. Among these results, we focus on two particular cases, especially relevant in our detection problem:

1) \textbf{IID case}: the entries of $\mathbf{H}$ are IID with zero mean and $\frac{1}{n}$ variance. This is the case for multiple PU signals being co-located as, e.g., when a single PU sends spatially multiplexed signals with equal transmit powers. In this case, the asymptotic value of (\ref{mi}) is given by \cite[Eq. (105)]{Moustakas2003} (see also, e.g., \cite[Eq. (95)]{Chen2012}, \cite[Eq. (1.14)]{tulino2004}), which yields
\begin{align} \label{eq:psi1}
\bar \Psi_1 &= \frac{1}{N_0} \left(1+\frac{1}{N_0}-\frac{1}{4} \mathcal{F}\left(\frac{1}{N_0},\beta \right) \right)^{-\beta}  \notag \\
& \times  \left(1+\frac{\beta}{N_0}-\frac{1}{4} \mathcal{F}\left(\frac{1}{N_0},\beta \right) \right)^{-1} \exp{ \left( \frac{N_0}{4} \mathcal{F}\left(\frac{1}{N_0},\beta \right) \right) }
\end{align}
with $\frac{k}{n} \to \beta$ as $n \to \infty$, and 
\begin{align}
\mathcal{F}\left( x,z \right) = \left( \sqrt{x (1+\sqrt{z})^2 +1 } - \sqrt{x (1-\sqrt{z})^2 +1 } \right)^2 .
\end{align}
Furthermore, $\mathcal{M}_r$
is obtained as \cite[Eq. (2.102)]{tulino2004}
\begin{align} \label{eq:momentsHiid}
\mathcal{M}_r^{(\rm{iid})} = \frac{1}{r} \sum_{i=1}^r \binom{r}{i} \binom{r}{i-1} \beta^i ,
\end{align}
which plugged into (\ref{eq:psi}) yield $\bar\Psi_{\ell +1}$, $\ell \geq 1$. We can now compute the limiting $\bar\Psi_2$, $\bar\Psi_3$, $\bar\Psi_4$, required for the cumulants in Proposition \ref{lemm:semi_variance}. Thus, setting $\ell = \{1,2,3\}$ in (\ref{eq:psi}) gives
\begin{align} \label{eq:psi2}
\bar\Psi_{2} &= \beta + N_0 , \\ \label{eq:psi3}
\bar\Psi_{3} &= \beta^2+\beta \left(1+2 N_0 \right)+ N_0^2 , \\ \label{eq:psi4}
\bar\Psi_{4} &= \beta^3+3 \beta^2 (1+N_0) + \beta \left(1+3 N_0+3 N_0^2\right)+N_0^3 .
\end{align}

2) \textbf{Unequal variances}: $\mathbf{H} = \mathbf{H}_{\rm{iid}} \mathbf{\Sigma}^{1/2}$ where $\mathbf{H}_{\rm{iid}}$ has IID entries with zero mean and $\frac{1}{n}$ variance, whilst $\mathbf{\Sigma}$ is a diagonal matrix with non-negative entries $\sigma_1^2,\ldots,\sigma_k^2$ uniformly bounded. This accommodates multiple signals from spatially distributed PUs having different transmit powers. For this case we invoke \cite[Thm. 1]{Hachem2008} to obtain
\begin{align} \label{eq:psi1_semi}
\bar \Psi_1 &= \frac{\beta \, \delta}{N_0}  \exp{ \left( \frac{\beta^2}{N_0}\delta \tilde\delta \right)} \prod _{i=1}^k \left(1+\frac{\beta}{N_0} \delta \sigma_i^2 \right)^{-\frac{1}{n}} ,
\end{align}
with $\frac{k}{n} \to \beta$,
\begin{align} \label{eq:delta}
\delta = \frac{1}{\beta} \left(1+\frac{\beta}{N_0} \tilde\delta\right)^{-1} ,
\end{align}
and $\tilde\delta$ being the unique positive solution to
\begin{align} \label{eq:deltatilde}
\tilde\delta = \frac{1}{k} \sum _{i=1}^k \frac{\sigma_i^2 (\beta \tilde\delta+N_0)}{\sigma_i^2+N_0+\beta \tilde\delta} .
\end{align}
For $\mathcal{M}_r$, according to \cite[Thm. 4]{Li2004},\footnote{See also \cite[Appx. C]{Morales2015} for a more general result which encompasses non-line-of-sight channels, i.e., $\mathbf{H}$ with non-zero mean entries.}
\begin{align} \label{eq:momentsHsemi}
\mathcal{M}_r^{(\rm{unequal})} &= \notag \\
& \hspace{-7mm} \sum_{i=1}^r \frac{\beta^i}{k^i} \sum_{r_1 +\ldots +r_i=r} \xi(r_1,\ldots,r_i) \cdot {\rm Tr}\left( \mathbf{\Sigma}^{r_1} \right) \cdots {\rm Tr}\left( \mathbf{\Sigma}^{r_i} \right) , 
\end{align}
where $\{r_1,\ldots,r_i\}$ are the strictly positive integer solutions to $r_1 +\ldots +r_i=r$ satisfying $r_1 \leq\ldots \leq r_i$, and
\begin{align} \label{eq:momentsHsemiCoeffs}
\xi(r_1,\ldots,r_i) = \frac{r!}{(r-i+1)! f_1! \cdots f_r!},
\end{align}
with $f_j$ being the number of entries in $\{r_1,\ldots,r_i\}$ equal to $j$. We can now compute $\bar\Psi_{\ell +1}$, $\ell \geq 1$, for the unequal variances case by plugging (\ref{eq:momentsHsemi}) in (\ref{eq:psi}), which results in
\begin{align} \label{eq:psi2_semi}
\bar\Psi_{2} &= N_0 + \frac{\beta}{k} {\rm Tr}(\mathbf{\Sigma})  , \\ \label{eq:psi3_semi}
\bar\Psi_{3} &= N_0^2 + 2 N_0 \frac{\beta}{k} {\rm Tr}(\mathbf{\Sigma}) + \frac{\beta}{k} {\rm Tr}(\mathbf{\Sigma}^2) + \frac{\beta^2}{k^2} {\rm Tr}^2(\mathbf{\Sigma}) , \\ \label{eq:psi4_semi}
\bar\Psi_{4} &= N_0^3 + 3 N_0^2 \frac{\beta}{k} {\rm Tr}(\mathbf{\Sigma}) +3 N_0 \left( \frac{\beta}{k} {\rm Tr}(\mathbf{\Sigma}^2) + \frac{\beta^2}{k^2} {\rm Tr}^2(\mathbf{\Sigma}) \right) \notag \\
& + \frac{\beta}{k} {\rm Tr}(\mathbf{\Sigma}^3) + 3 \frac{\beta^2}{k^2} {\rm Tr}(\mathbf{\Sigma}) {\rm Tr}(\mathbf{\Sigma}^2) +  \frac{\beta^3}{k^3} {\rm Tr}^3(\mathbf{\Sigma}) .
\end{align}

\begin{figure}[t]
\centerline{\includegraphics[width=0.85\columnwidth]{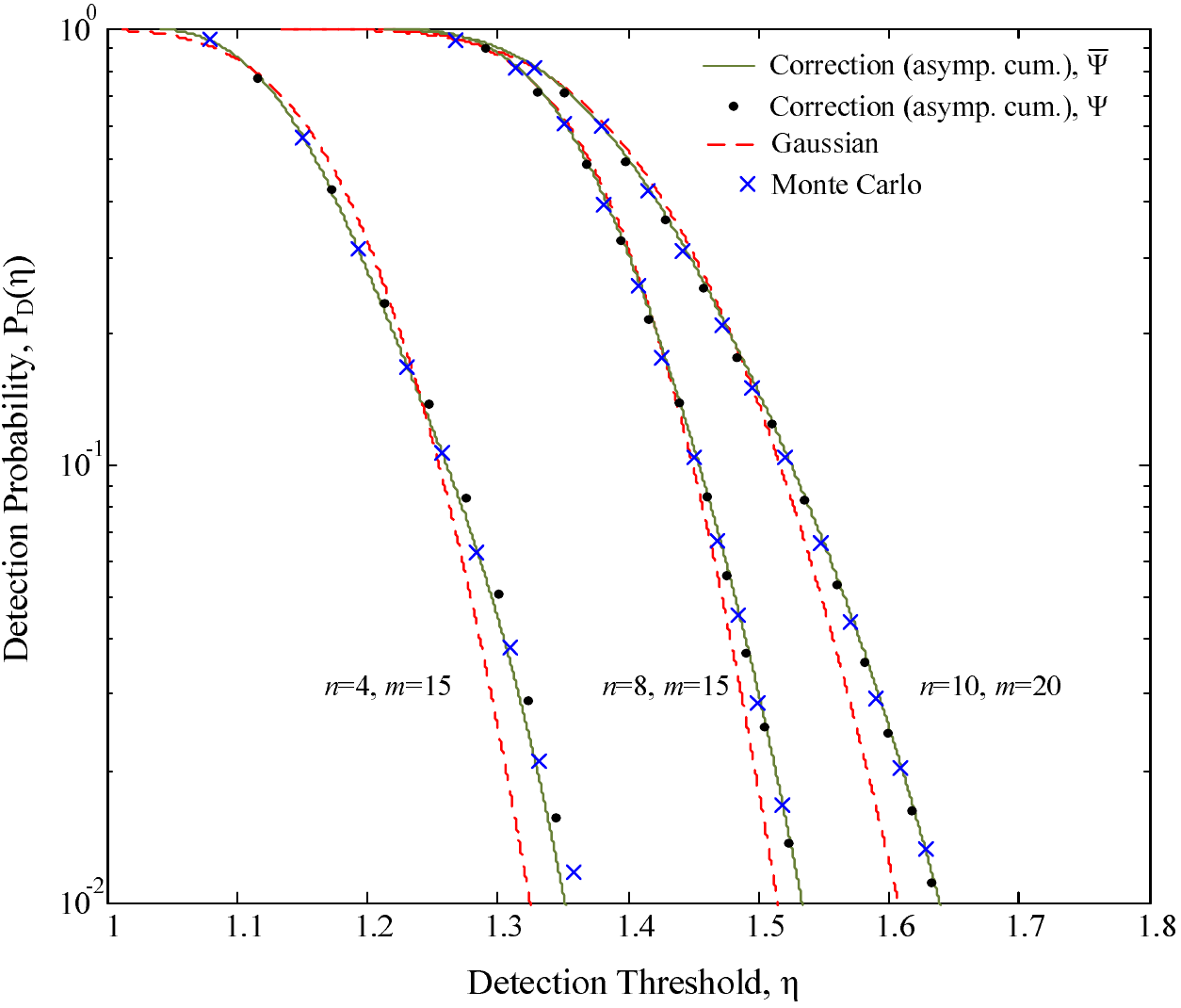}}
\caption{Detection probability vs.\ detection threshold, with $k=n$, $N_0=6.$}
\label{fig:detection_cdf_asymptotic}
\end{figure}

As we will see shortly, the convergence $| \Psi_\ell - \bar \Psi_\ell | \overset{a.s.}{\to} 0$ is quite fast, with $\bar \Psi_\ell$ giving an accurate approximation of $\Psi_\ell$ for not-so large $n$. Thus, under either the IID or the unequal variances assumption on $\mathbf{H}$, the cumulants in Proposition \ref{lemm:semi_variance} can be accurately approximated using $\bar\Psi_{\ell}$, $\ell =1,\ldots,4$, given respectively by either (\ref{eq:psi1}) and (\ref{eq:psi2})-(\ref{eq:psi4}), or (\ref{eq:psi1_semi}) and (\ref{eq:psi2_semi})-(\ref{eq:psi4_semi}). Plugging these cumulants in (\ref{edgeworth}), we can compute the detection probability in two typical scenarios of interest: 1) detection of a single PU which transmits $k$ spatially multiplexed signals with equal power, or 2) multiple spatially distributed PUs with different transmit powers. As an example, Fig. \ref{fig:detection_cdf_asymptotic} shows the detection probability vs. the detection threshold corresponding to the first scenario (IID case), with the same number of antennas at the PU and the SU, i.e., $k=n$. The solid curve is obtained with the asymptotic values $\bar\Psi_{\ell}$, while the dots are computed via the exact $\Psi_{\ell}$ for a particular channel realization $\mathbf{H}$. On the one hand, we see that the correction curve provides a satisfying match with Monte Carlo simulations. On the other hand, the correction based on $\bar\Psi_{\ell}$ is almost indistinguishable from the one based on $\Psi_{\ell}$ for a not-so-large\footnote{Larger values of $n$ could be shown with an increased accuracy. However, we deliberately considered a relatively low number of antennas to show the accuracy and quick convergence of our asymptotic results.} $n=8$, which shows the quick convergence of $\Psi_{\ell}$. Remarkably, even for a moderate number of antennas and observations, the detection probability can be computed without explicitly knowing $\mathbf{H}$.

\section{Threshold Design}
\label{sec:design}

\begin{figure}[t]
\centerline{\includegraphics[width=0.85\columnwidth]{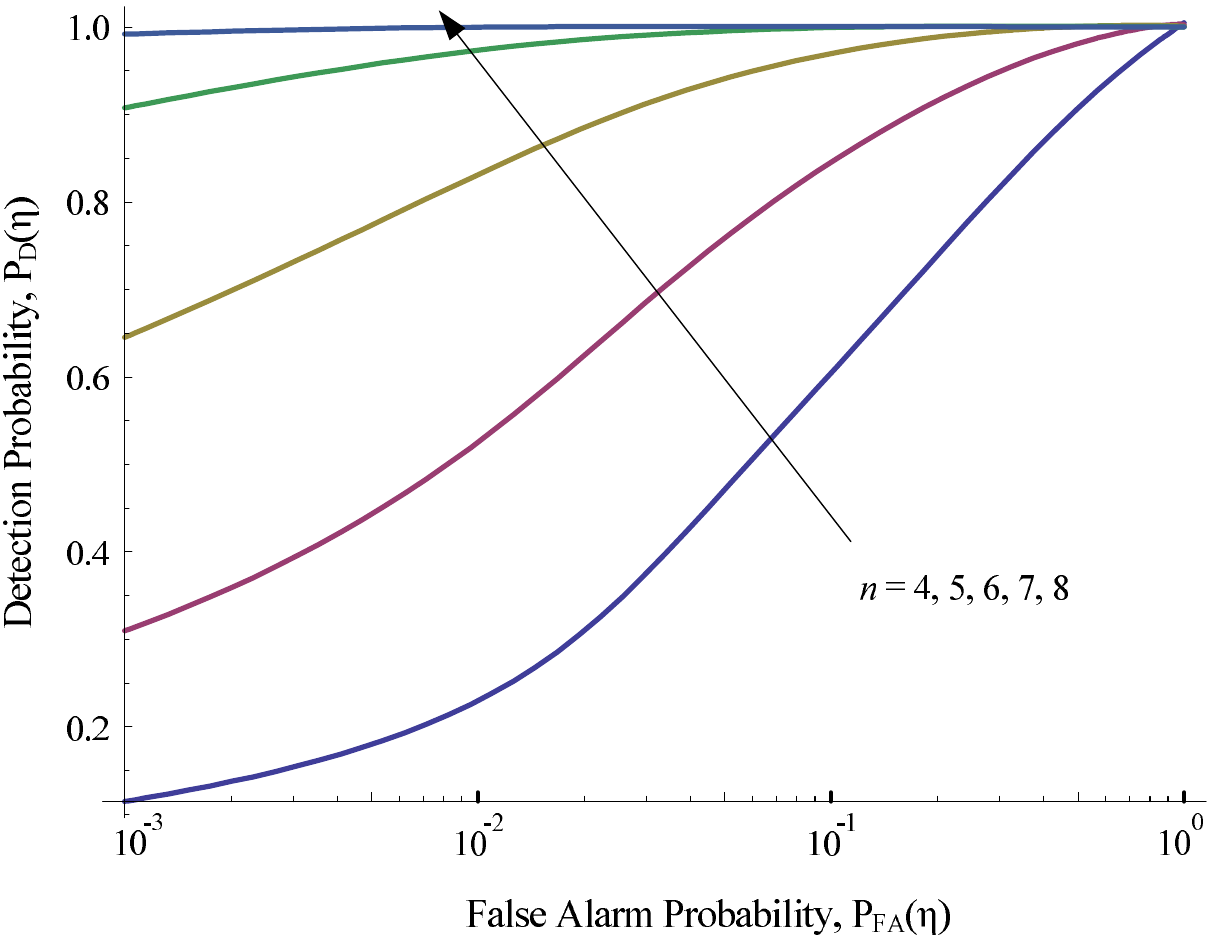}}
\caption{Analytic ROC curve (detection probability vs.\ false alarm probability) with $N_0=6$, $m = 5n$ and $k=n$.}
\label{fig:ROC_analytic}
\end{figure}

Having derived expressions to compute the probabilities of detection and false alarm, we now put these expressions to work in order to design the detection threshold, which will be used at the SUs to perform the GLR test, i.e., to determine the presence/absence of PU signals. As previously discussed, the PU-SU channel is typically unknown in practice, and therefore, the design usually relies on a false alarm probability requirement. Note that the false alarm probability is independent of $k$, and therefore, the number of PU signals is not required to design the threshold. Once the threshold is set, we can, under some statistical assumptions on $\mathbf{H}$, e.g., IID or with unequal variances, compute the corresponding detection probability for different PU-SU scenarios such as, e.g., a single transmitting PU with $k$ antennas, or $k$ spatially distributed PUs. Note that the probability of detection is a performance measure computed offline and that this quantity is not required at the SUs. Therefore, no specific knowledge on $k$ is required at the cognitive device in order to perform the GLR test.

\begin{figure}[t]
\centerline{\includegraphics[width=0.97\columnwidth]{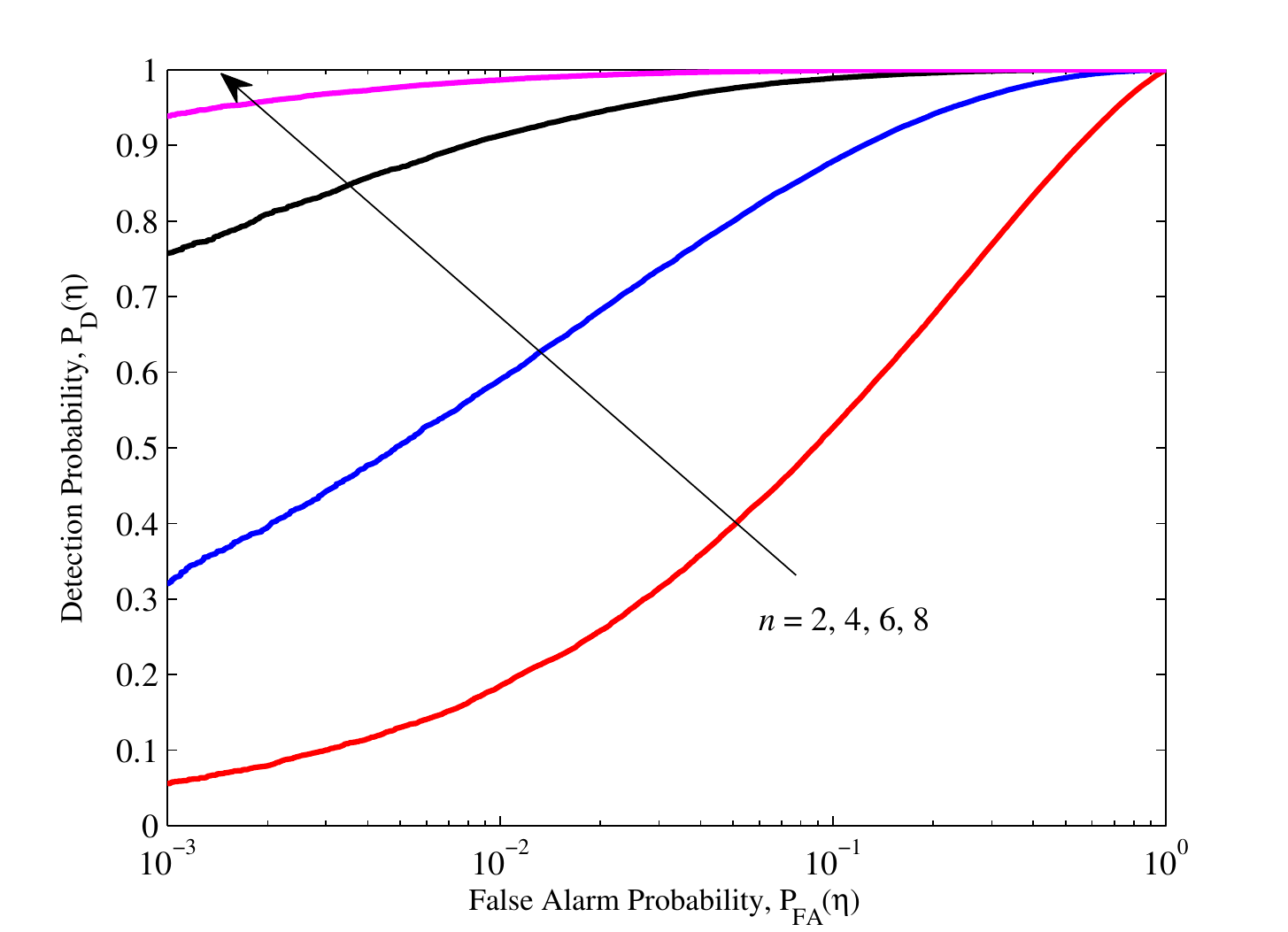}}
\caption{ROC curve (detection probability vs. false alarm probability) with $k=8$, $m=32$, $N_0=6$. }
\label{fig:roc_increasing_n}
\end{figure}

\begin{figure}[t]
\centerline{\includegraphics[width=0.97\columnwidth]{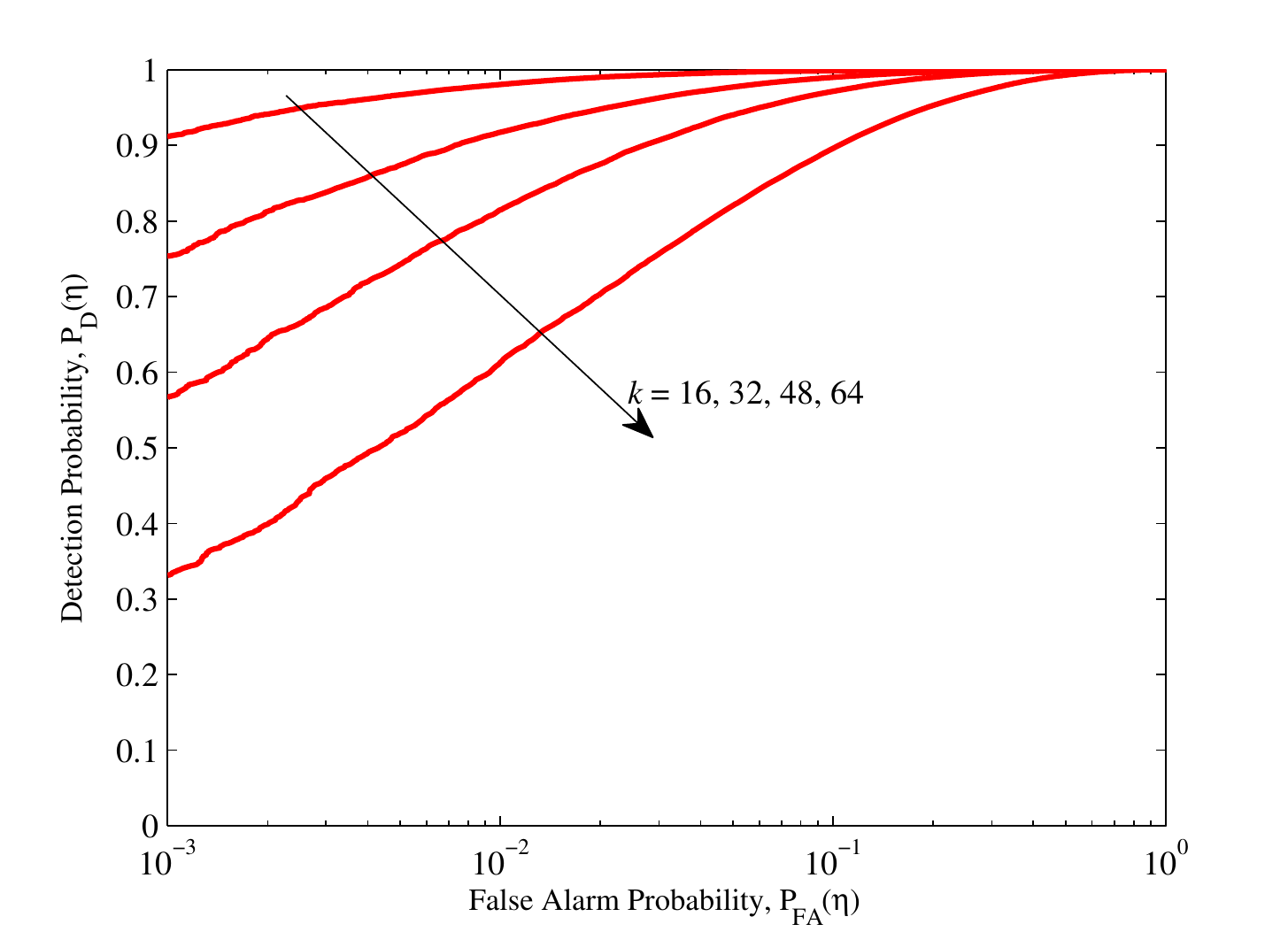}}
\caption{ROC curve (detection probability vs. false alarm probability) with $n=8$, $m=40$, $N_0=6$. }
\label{fig:roc_largeK}
\end{figure}

From (\ref{eq:cdf_W01}), the false alarm probability is approximated by
\begin{align}\label{eq:pfa_design}
 {\rm P}_{\rm FA}(\eta) &= 1 - {\rm F}_{W_0}(\eta)  \notag \\
 &\approx 1 - \mathcal{G}\left(\frac{\eta- \mu_{W_0,1} 	 }{\sigma_{W_0}} , \sigma_{W_0},\kappa_{W_0,3},\kappa_{W_0,4}\right),
\end{align}
where $\mu_{W_0,1}$, $\sigma_{W_0}^2$, $\kappa_{W_0,3}$, and $\kappa_{W_0,4}$ are given by (\ref{k1})-(\ref{k4}), and thus the minimum threshold which can satisfy a false alarm probability requirement of $\alpha_0$ can be approximated as
\begin{align}\label{eq:approx_threshold_total}
\eta_0 \approx  {\rm P}_{\rm FA}^{-1}(\alpha_0).
\end{align}
This threshold can be computed numerically from (\ref{eq:pfa_design}), and then used to obtain the corresponding detection probability
\begin{align}\label{eq:pd_design}
 {\rm P}_{\rm D}(\eta_0) &= 1 - {\rm F}_{W_1}(\eta_0)  \notag \\
 &\approx 1 - \Phi(x) + \sqrt{\frac{2}{\pi}} \frac{ e^{-\frac{x^2}{2}}}{12 \sigma_{W_1}^3} \kappa_{W_1,3} (x^2-1),  
\end{align}
with $x=\frac{\eta_0-\mu_{W_1,1}	 }{\sigma_{W_1}}$ and $\mu_{W_1,1}$, $\sigma_{W_1}^2$, $\kappa_{W_1,3}$ given by (\ref{eq:asympMeanW1})-(\ref{eq:k3W1}). The pair of values $\{ {\rm P}_{\rm FA}(\eta), {\rm P}_{\rm D}(\eta) \}$ defines the receive operating characteristics (ROC) curve, which is plotted in Fig. \ref{fig:ROC_analytic} for the IID scenario (a single transmitting PU with $k$ antennas), with $N_0=6$, $k=n$, and $m=5n$. We see that, with an average SNR of $-10 \log_{10} N_0 = -7.78$ dB, a low false alarm probability and high detection probability can be simultaneously achieved with $n>7$ antennas and $m>35$ observation samples. To better illustrate the effect of increasing $n$, we now keep the number of PU signals $k$ and observations $m$ fixed with $k=8$, $m=32$, and $N_0=6$. The corresponding ROC curve is shown in Fig. \ref{fig:roc_increasing_n}, where we observe that for a given false alarm probability, the detection probability increases as $n$ grows, implying that SUs with more antennas attain a higher accuracy in detecting the PU signals.

Let us now consider a primary system where a (possibly) large number $k$ of PU signals are simultaneously transmitted (via spatial multiplexing) in the downlink. In contrast, consider mobile SUs having a limited number of antennas $n=8$, representative of low-complexity cognitive radios. In this type of scenarios, we aim at understanding the implications of having a growing number of signals $k$. This is illustrated in Fig. \ref{fig:roc_largeK}, which shows the ROC curve for different values of $k$ with $n=8$, $m=40$, $N_0=6$. We observe that, for a given false alarm probability, the detection probability decreases as the number of PU signals $k$ grows.
In fact, it can be seen from the definitions of $W_0$ and $W_1$ in Section \ref{FA_PD_defs} that, in the limit $k \to \infty$, the presence of ``infinitely'' many transmitted signals would be indistinguishable from noise.

\begin{figure}[t]
\centerline{\includegraphics[width=0.85\columnwidth]{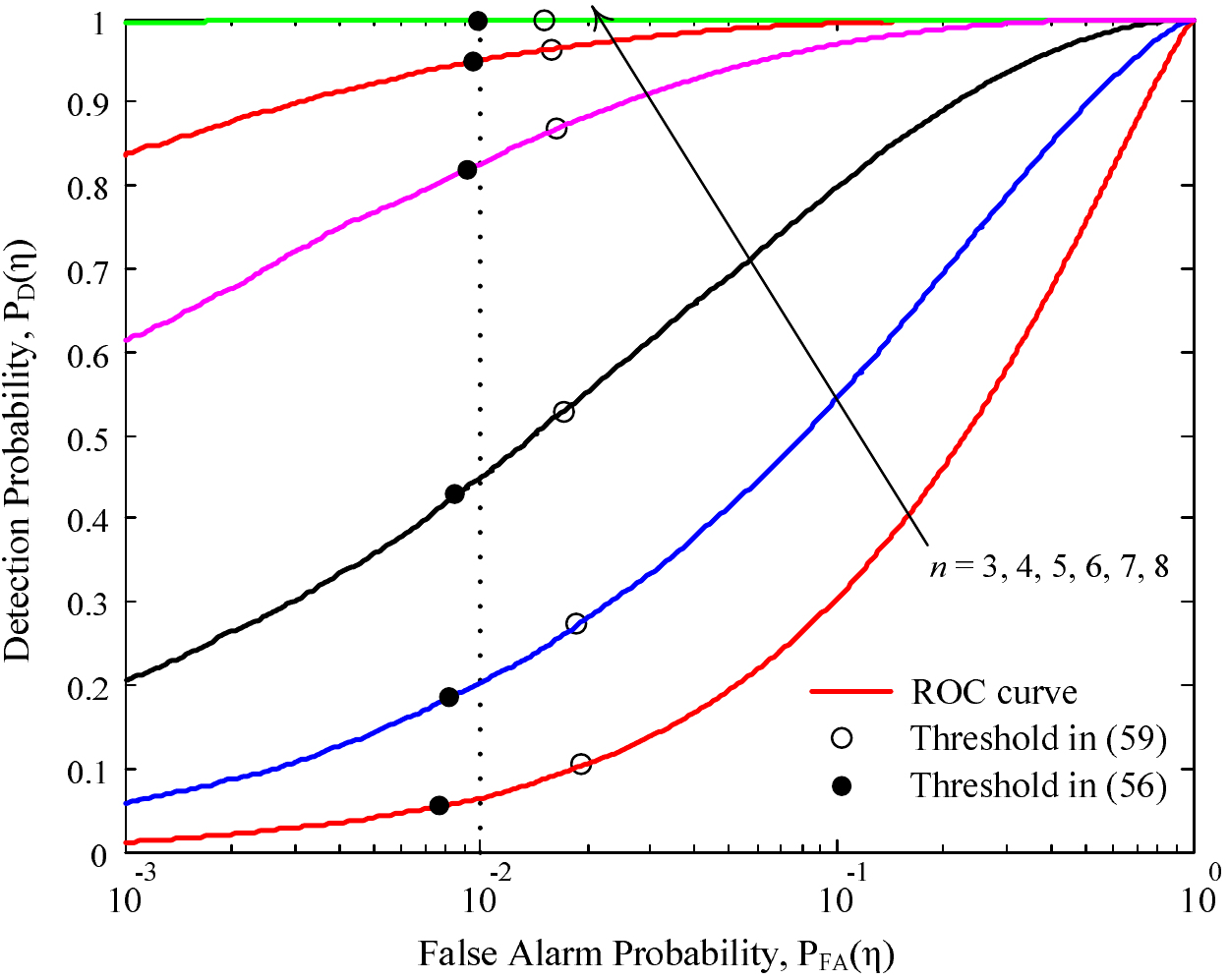}}
\caption{ROC curve (detection probability vs.\ false alarm probability) with $N_0=6$, $m = 5n$ and $k=n$. The vertical dotted line at a false alarm probability of $0.01$ is shown for convenience.}
\label{fig:ROC_testThreskappa01}
\end{figure}

\begin{figure}[t]
\centerline{\includegraphics[width=0.85\columnwidth]{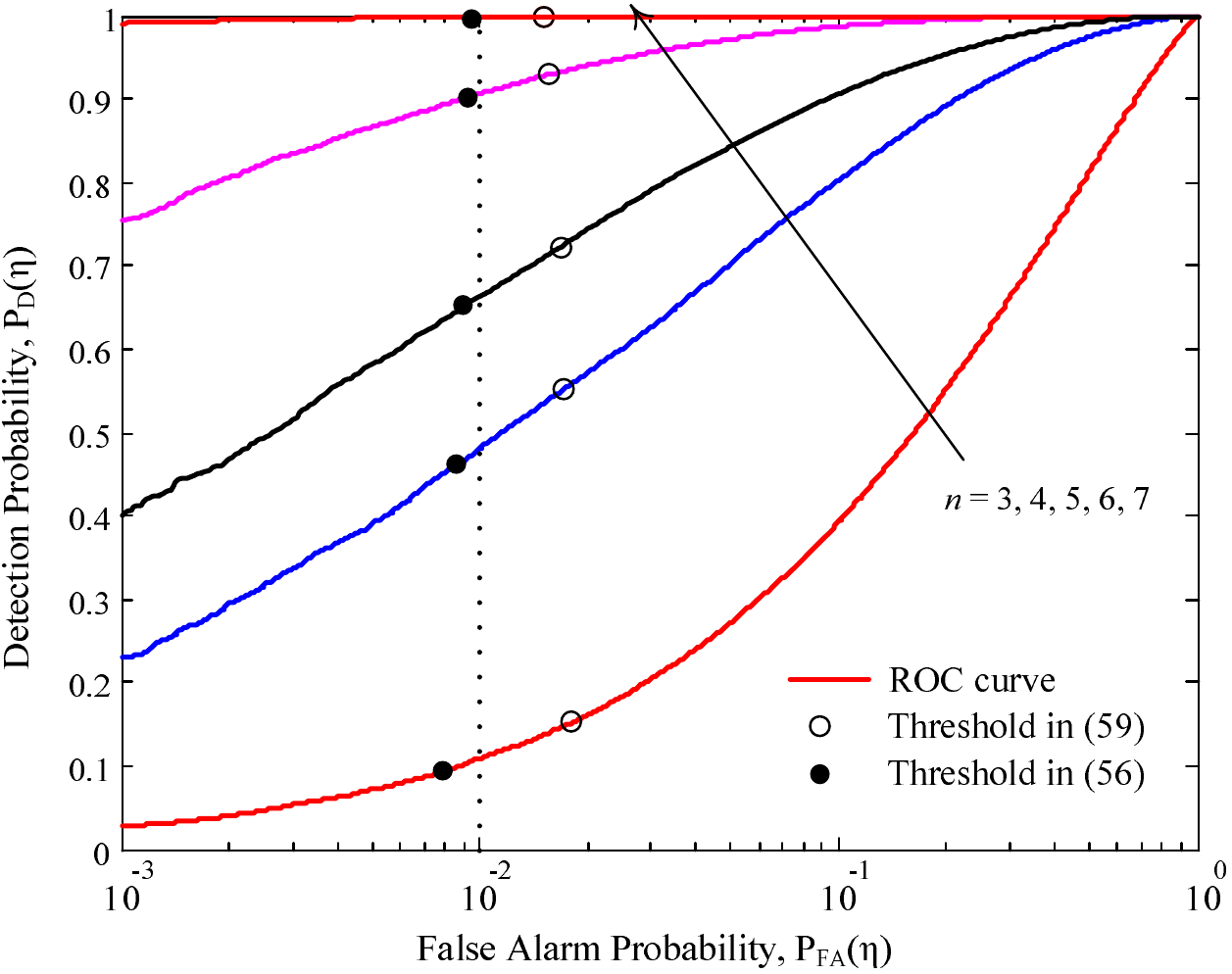}}
\caption{ROC curve (detection probability vs.\ false alarm probability) with $N_0=6$, $m = 7n$ and $k=n$. The vertical dotted line at a false alarm probability of $0.01$ is shown for convenience.}
\label{fig:ROC_testThreskappa02}
\end{figure}

The above design and ROC analysis is based on accurate representations of  ${\rm P}_{\rm FA}(\eta)$ and  ${\rm P}_{\rm D}(\eta)$, which have been numerically validated earlier in this paper. However, the design threshold needs to be computed by numerical search. In order to simplify the threshold design, we focus on our asymptotic results next.

For large $n$ and $m$, Theorem \ref{the:asymptotic_cdf} suggests that the false alarm probability can be approximated as
\begin{align}
 {\rm P}_{\rm FA}(\eta)= 1 - {\rm F}_{W_0}(\eta) \approx \frac{1}{2}\left(1-{\rm erf}\left(\frac{n\left(\eta - \bar{\mu}\right)}{\sqrt{2\bar{\sigma}^2}}\right)\right) \; ,
\end{align}
and thus the minimum threshold  which can satisfy a false alarm probability requirement of $\alpha_0$ can be approximated as
\begin{align}\label{eq:approx_threshold}
\eta_0 \approx \bar{\mu} +  \frac{\sqrt{2 \bar{\sigma}^2}}{n} {\rm erf}^{-1}\left(1-2\alpha_0 \right) \; .
\end{align}
A natural question then is whether this approximation is accurate enough for practical numbers of antennas and observations. To investigate this, we plot the ROC curve in Figs.\ \ref{fig:ROC_testThreskappa01} and \ref{fig:ROC_testThreskappa02} for $c=1/5$ and $c=1/7$ respectively.  The `ROC curve' is plotted using Monte Carlo simulations. The threshold computed via the Gaussian approximation in (\ref{eq:approx_threshold}) for a target false alarm probability of $\alpha_0=0.01$ is shown. For comparison, the threshold computed with the c.d.f. approximation function (through higher order cumulants) in (\ref{eq:approx_threshold_total}) is also shown.  We observe that the approximate threshold (\ref{eq:approx_threshold}) yields a false alarm probability slightly above the requirement $\alpha_0=0.01$, whilst this requirement is successfully met with the threshold in (\ref{eq:approx_threshold_total}). As expected, this loss in accuracy diminishes as $n$ (and consequently $m$) increases, which is in agreement with Theorem \ref{the:asymptotic_cdf}. Further, for satisfactory detection probabilities above $80\%$, the threshold in (\ref{eq:approx_threshold_total}) results in a false alarm probability that tightly meets the requirement $\alpha_0=0.01$.  
Moreover, we observe in Figs.\ \ref{fig:ROC_testThreskappa01} and \ref{fig:ROC_testThreskappa02} that decreasing $c$ for the same $n$ results in a higher detection probability, as more observations are utilized for detection.

\section{Conclusion}
\label{sec:conclusion}

Multiple-antenna signal detection has been addressed in cognitive radio networks with multiple primary user signals. By virtue of new closed-form expressions for the moments of the GLRT statistic, we have derived easy-to-compute and accurate expressions for the false alarm and detection probability. We have also proved that the GLRT statistic under hypothesis $\mathcal{H}_0$ converges to a Gaussian random variable when the number of antennas and observations grow large simultaneously. Further, the detection probability has been analyzed for a large number of primary user signals being no less than the number of receive antennas at the secondary user. Using results from large random matrix theory, we have shown that the (instantaneous) detection probability can be accurately approximated without explicit knowledge of the channel for a practical number of antennas. Leveraging our analytical results, simple design rules have been proposed to approximate the minimum detection threshold in order to achieve a desired false alarm probability.


\begin{appendix}
\subsection{Proof of Theorem \ref{the:semicorr_fullrank}}\label{app:semicorr_fullrank}

Let us first derive an expression for the moments of
\begin{align}
W = \frac{\frac{{\rm Tr}\left( \mathbf{X} \mathbf{X}^\dagger\right)}{n}}{ {\rm det}(\mathbf{X} \mathbf{X}^\dagger)^{\frac{1}{n}}} \; ,\quad \mathbf{X} \sim \mathcal{CN}_{n,m}\left(\mathbf{0}_{n,m}, \mathbf{R} \right) ,
\end{align}
where $\mathbf{R} \in \mathbb{C}^{n \times n}$ is a Hermitian positive definite matrix. Denote $0 \le \lambda_n \le\lambda_{n-1},\ldots, \le \lambda_1 < \infty$ as the ordered eigenvalues of $\mathbf{X} \mathbf{X}^\dagger$, which have a joint distribution\footnote{${\rm det}_n \hspace{-0.8mm} \left(g(i,j)\right)$ is the determinant of an $n \hspace{-0.5mm} \times \hspace{-0.5mm} n$ matrix with $(i,j)$th entry $g(i,j)$.} \cite{james1964}
\begin{align} \label{eq:jointPDFa}
 f_{\bLam}(\lambda_1,\ldots,\lambda_n) &=  \frac{(-1)^{\frac{n(n-1)}{2}}}{\prod_{\ell=1}^n (m-\ell)!}  {\rm det}_n \hspace{-0.7mm} \left(e^{-y_i^{-1} \lambda_j}\right) \notag \\
 & \times \frac{\left(\prod_{\ell=1}^n \lambda_\ell^{m-n} y_\ell^{-m}\right) {\rm det}_n \hspace{-0.7mm}  \left( \lambda_j^{i-1} \right)   }{\prod_{i<j} (y_i^{-1}-y_j^{-1})}
\end{align}
where $y_1, \ldots, y_n$ are the eigenvalues of $\mathbf{R}$. By denoting $\mathcal{D}=\{0 \le \lambda_n \le \ldots \le \lambda_1 < \infty\}$, we have
\begin{align}\label{eq:init_moment1}
& \mu_{W,p} \notag \\
&= \frac{1}{n^p} \int_{\mathcal{D}} \left(\frac{\sum_{i=1}^n \lambda_i}{\prod_{i=1}^n \lambda_i^{\frac{1}{n}}}\right)^{p}  f_\bLam(\lambda_1,\ldots,\lambda_n) {\rm d} \lambda_1,\ldots {\rm d} \lambda_n \notag \\
&= \frac{1}{n^p} \frac{ {\rm d}^p}{ {\rm d} \omega^p} \int_{\mathcal{D}} \frac{e^{\omega \sum_{i=1}^n \lambda_i} }{\prod_{i=1}^n \lambda_i^{\frac{p}{n}}}f_{\bLam}(\lambda_1,\ldots,\lambda_n) {\rm d} \lambda_1,\ldots {\rm d} \lambda_n \biggr|_{\omega=0} ,
\end{align}
where the second equality follows by noting that 
$\left. \frac{ {\rm d}^p}{ {\rm d} \omega^p} e^{\omega \psi} \right|_{\omega=0}= \psi^p $
and interchanging the integral and the derivative by virtue of Leibniz integral rule \cite[Eq. (3.3.7)]{Abramowitz1970}

Substituting (\ref{eq:jointPDFa}) into (\ref{eq:init_moment1}),
\begin{align} \label{eq:mu_I}
& \mu_{W,p}
= \frac{ (-1)^{\frac{n(n-1)}{2}} }{n^p \prod_{\ell=1}^n  (m-\ell)! }  \frac{\prod_{\ell=1}^n y_\ell^{-m}}{ \prod_{i<j} (y_i^{-1}-y_j^{-1})} \notag \\
& \frac{ {\rm d}^p}{ {\rm d} \omega^p} \hspace{-0.5mm}
\underbrace{ \int_{\mathcal{D}} \frac{e^{\omega \sum_{i=1}^n \lambda_i} {\rm det}_n \hspace{-0.7mm}  \left( \lambda_j^{i-1} \right) }{\prod_{i=1}^n \lambda_i^{\frac{p}{n}-m+n}}  {\rm det}_n \hspace{-0.7mm}  \left(\hspace{-0.5mm} e^{-\frac{\lambda_j}{y_i}} \right)
{\rm d} \lambda_1,\ldots {\rm d} \lambda_n }_{\mathcal{I}_1(\omega)} \biggr|_{\omega=0} \hspace{-0.7mm},
\end{align}
and the integral $\mathcal{I}_1(\omega)$, defined on the set $\{ \omega \in \mathbb{R} : y_j^{-1} -\omega > 0 \}$, can be evaluated as \cite[Eq. (51)]{chiani03b} 
\begin{align} \label{eq:I1_1}
\mathcal{I}_1(\omega) &=
{\rm det}_n \hspace{-0.7mm}\left(\int_0^\infty e^{-\lambda(y_j^{-1} -\omega)} \lambda^{m-n-\frac{p}{n}+i-1} {\rm d} \lambda\right) .
\end{align}
From \cite[Eq. (8.310)]{Gradshteyn1965}, we rewrite (\ref{eq:I1_1}) as
\begin{align} \label{eq:I1_2}
\mathcal{I}_1(\omega) &=
 {\rm det}_n \hspace{-0.7mm} \left(\frac{\Gamma\left(m-n-\frac{p}{n}+i\right)}{\left(y_j^{-1} -\omega\right)^{m-n-\frac{p}{n}+i}} \right) .
\end{align}

Substituting (\ref{eq:I1_2}) into (\ref{eq:mu_I}), followed by some algebraic manipulations, we obtain
\begin{align} \label{eq:init_moment3}
& \mu_{W,p} \notag \\
&=  \frac{ (-1)^{\frac{n(n-1)}{2}} }{n^p \prod_{\ell=1}^n  (m-\ell)! }  \frac{\prod_{i=1}^n y_i^{-m}  \Gamma\left(m-n-\frac{p}{n}+i\right)}{ \prod_{i<j} (y_i^{-1}-y_j^{-1})} \notag \\
&  \hspace{3mm} \times \frac{ {\rm d}^p}{ {\rm d} \omega^p} \frac{1}{\prod_{i=1}^n \left(y_i^{-1} -\omega\right)^{m-n-\frac{p}{n}}} {\rm det}_n \hspace{-0.8mm} \left(\frac{1}{\left(y_j^{-1} -\omega\right)^{i}} \right)   \biggr|_{\omega=0} \notag \\
&=  \frac{ (-1)^{\frac{n(n-1)}{2}} }{n^p } \prod_{i=1}^n \left(\frac{y_i^{-m} \Gamma\left(m-n-\frac{p}{n}+i\right)  }{ (m-i)!}\right) \mathcal{I}_2(\omega) \biggr|_{\omega=0} ,
\end{align} 
with
\begin{align}\label{eq:I_omega}
\mathcal{I}_2(\omega) &=\frac{ {\rm d}^p}{ {\rm d} \omega^p} \frac{1}{\prod_{i=1}^n \left(y_i^{-1} -\omega\right)^{m-\frac{p}{n}}} .
\end{align}
Applying Leibniz rule for differentiation \cite{Gradshteyn1965} gives
\begin{align}\label{eq:I_omega2}
& \mathcal{I}_2(\omega) \notag \\
  &= \sum_{k_1+\ldots + k_n=p} \binom{p}{k_1,\ldots,k_n} \prod_{i=1}^n \frac{ {\rm d}^{k_i}}{ {\rm d} \omega^{k_i}} \frac{1}{\left(y_i^{-1} -\omega\right)^{m-\frac{p}{n}}}  \notag \\
 &= \sum_{k_1+\ldots + k_n=p} \binom{p}{k_1,\ldots,k_n} \prod_{i=1}^n \prod_{j=1}^{k_i}  \frac{(-1)^{k_i}\left(m-\frac{p}{n}+j-1\right)}{\left(y_i^{-1} -\omega\right)^{m-\frac{p}{n}+k_i}} .
\end{align}
Substituting $\omega=0$ into (\ref{eq:I_omega2}), and the resultant expression into (\ref{eq:init_moment3}) followed by some algebraic manipulation, we obtain
\begin{align} \label{eq:moments_W}
\mu_{W,p}&=
\frac{ p! \prod_{i=1}^n y_i^{-\frac{p}{n}} }{n^p } \prod_{j=0}^{n-1}\frac{ \Gamma\left(m-n+1-\frac{p}{n}+j\right)  }{ \Gamma\left(m-n+1+j \right)} \notag \\
& \times \sum_{k_1+\ldots + k_n=p} \, \prod_{i=1}^n \frac{\Gamma\left(m-\frac{p}{n}+k_i\right) y_i^{k_i}}{\Gamma(k_i+1) \Gamma\left(m-\frac{p}{n}\right)} ,
\end{align}
which yields (\ref{eq:semicorr_fullrank}) for $W_1$ by setting $\mathbf{R} = \mathbf{H} \mathbf{H}^\dagger + \mathbf{I}_n N_0$, in which case, consequently, $N_0<y_1 \le y_2 \le \ldots\le y_n < \infty$. For the moments of $W_0$, (\ref{eq:moments_W}) simplifies to (\ref{eq:iid_fullrank}) after substituting $y_1=\ldots = y_n=N_0$.

\subsection{Proof of Proposition \ref{lem:expansion_iid}}
\label{app:asymp_moments}

We start by rewriting (\ref{eq:iid_fullrank}) as
\begin{align} \label{moments_barnes}
\mu_{W_0,p} = \frac{\prod _{j=1}^p(m n-j)}{n^p} \cdot \frac{G\left(m-\frac{p}{n}+1\right) \, G(m-n+1)}{G\left(m-n-\frac{p}{n}+1\right) \, G(m+1)} ,
\end{align}
where $G(\cdot)$ is the Barnes-$G$ function, which admits the following asymptotic expansion for large $z$ \cite{voros1987}:
\begin{align}\label{eq:barnesg}
\ln G(z+1) &= \frac{1}{12} - \ln \mathcal{A} + \frac{z}{2} \ln (2\pi) + \left(\frac{z^2}{2} - \frac{1}{12}\right) \ln z  \notag \\
& - \frac{3 z^2}{4}+ \sum_{k=1}^{\infty} \frac{B_{2k+2}}{4k(k+1) z^{2k}} 
\end{align}
where $\mathcal{A}$ is the Glaisher-Kinkelin constant \cite{voros1987} and $B_k$ is the Bernoulli number \cite[pp. 803]{Abramowitz1970}.

Taking the logarithm of (\ref{moments_barnes}) and noting that $m=n/c$,
\begin{align} \label{log_moments_barnes}
& \ln{\mu_{W_0,p}} = \sum _{j=1}^p \ln\left(\frac{n^2}{c}-j\right)  - p \ln{n} + \ln{G\left(\frac{n}{c}-\frac{p}{n}+1\right)} \nonumber \\
& + \ln{G\left(n\left(\frac{1}{c}-1\right)+1\right)} - \ln{G\left(\frac{n}{c}+1\right)} \nonumber \\
& - \ln{G\left(n(\frac{1}{c}-1)-\frac{p}{n}+1\right)} .
\end{align}
It is also convenient to note that, for large $n$,
\begin{align} \label{log_expansion}
\ln\left(\frac{n^2}{c}-j\right) = 2 \ln{n} -\ln{c} - \sum_{\ell=1}^{\infty} \frac{(c \, j)^{\ell}}{\ell \, n^{2\ell}} ,
\end{align}
and, therefore,
\begin{align} \label{log_expansion2}
\sum _{j=1}^p \ln\left(\frac{n^2}{c}-j\right) = 2p \ln{n} - p\ln{c} - \sum_{\ell=1}^{\infty} \frac{c^{\ell}}{\ell \, n^{2\ell}} H_{p,-\ell} ,
\end{align}
where $H_{a,b}$ are the Harmonic numbers. Using the expansions (\ref{log_expansion2}) and (\ref{eq:barnesg}) in (\ref{log_moments_barnes}), and after further algebra, we arrive at
\begin{align} \label{asymptoticLogMoments}
& \ln \mu_{W_0,p} \notag \\
&= \frac{3}{2}p  + \left(\frac{p}{c}-p-\frac{1}{2}\left(\frac{p}{n}\right)^2\right) \log(1-c) \nonumber \\
&-\sum _{\ell=1}^{\infty} \frac{c^{\ell}}{\ell \, n^{2\ell}} H_{p,-\ell}  -\left(\frac{1}{2}\left(\frac{n}{c}-\frac{p}{n}\right)^2-\frac{1}{12}\right) \sum _{\ell=1}^{\infty} \frac{(c p)^{\ell}}{\ell \, n^{2 \ell}} \nonumber \\
& + \left(\frac{1}{2}\left(\frac{n}{c}-n-\frac{p}{n}\right)^2-\frac{1}{12}\right)  \sum _{\ell=1}^{\infty} \frac{(c p)^{\ell}}{\ell \, (1-c)^{\ell} \, n^{2 \ell}} \nonumber \\
& +\sum _{k=1}^{\infty} \frac{B_{2k+2} \, c ^{2k}}{4k(k+1) \, n^{2k}} \sum _{r=1}^{\infty} \frac{(2k)_r (p c)^r}{r! \, n^{2r}} \left(1-(1-c)^{-2k-r} \right) .
\end{align}
Rearranging terms in (\ref{asymptoticLogMoments}) yields
\begin{align}\label{eq:logmellin_iid2}
 \ln \mu_{W_0,p}  = \sum_{q=0}^\infty \frac{ A_{p,q}(c)}{n^{2q}} ,
\end{align}
with coefficients $A_{p,q}(c)$ as given in Proposition \ref{lem:expansion_iid}.
From (\ref{eq:logmellin_iid2}), we have that
\begin{align} \label{muExpansionProd}
\mu_{W_0,p} &= \exp\left(\sum_{q=0}^\infty \frac{ A_{p,q}(c)}{n^{2q}}\right) \nonumber \\
&= e^{A_{p,0}(c)} \prod_{j=1}^{N} \sum_{r=0}^{N-j+1} \frac{1}{r!} \left( \frac{A_{p,j}(c)}{n^{2j}} \right)^r    + O\left(\frac{1}{n^{2(N+1)}}\right),
\end{align}
where the second equality is obtained after expanding $\exp{x}$ around $x=0$ with $N$ an arbitrary positive integer.
Finally, rearranging terms in (\ref{muExpansionProd}) yields the series in Proposition \ref{lem:expansion_iid}.


\subsection{The case c=1: asymptotic moments and cumulants}
\label{app:asymp_moments_c1}

The derivation steps are similar to those given in Appendix \ref{app:asymp_moments}. For $c=1$ (equivalently $m=n$), (\ref{moments_barnes}) specializes to
\begin{align} \label{moments_barnes_c1}
\mu_{W_0,p} = \frac{\prod _{j=1}^p(n^2-j)}{n^p}  \frac{G\left(n-\frac{p}{n}+1\right)}{G\left(1-\frac{p}{n}\right) \, G(n+1)} .
\end{align}
Taking logarithms at both sides of the equality,
\begin{align} \label{log_moments_barnes_c1}
 \ln{\mu_{W_0,p}} &= \sum _{j=1}^p \ln\left(n^2-j\right)  - p \ln{n} + \ln{G\left(n-\frac{p}{n}+1\right)} \nonumber \\
& - \ln{G\left(n+1\right)} - \ln{G\left(1-\frac{p}{n}\right)},
\end{align}
where, for $n$ large, the summation can be expanded as in (\ref{log_expansion2}) and the $\ln{G(\cdot)}$ terms can be expanded using (\ref{eq:barnesg}), with one exception: here, the term $\ln G\left(1-\frac{p}{n}\right)$ does not admit the expansion (\ref{eq:barnesg}) for $\log G(z+1)$, only valid for large $z$, and therefore we rely on the Taylor series expansion around $z=0$,
\begin{align} \label{log_barnes_taylor}
\log G(1-\frac{p}{n}) &= -\frac{p}{2n} \left( \ln{2\pi}-1 \right) - \frac{1}{2} \left( 1+ \mathcal{C} \right) \left( \frac{p}{n} \right)^2   \nonumber \\
& \quad - \sum_{k=3}^{\infty} \frac{p^k \zeta(k-1)}{k \, n^k} ,
\end{align}
where $\mathcal{C}$ is the Euler-Gamma constant and $\zeta(\cdot)$ is the Riemann-Zeta function.

After substitution of the corresponding expansions and tedious algebra, we arrive at   
\begin{align} \label{asymptoticLogMoments_c1}
& \log{\mu_{W_0,p}} = \frac{3}{2}p - \frac{p}{2n} + \frac{1}{2}\left(\frac{p}{n}\right)^2 \left(\mathcal{C}-\frac{1}{2}+\log{n}\right) \nonumber \\
& \quad - \frac{1}{2} \left(n^2+\left(\frac{p}{n}\right)^2-2p-\frac{1}{6} \right) \sum _{k=1}^{\infty}\frac{p^k}{k n^{2k}} - \sum _{k=1}^{\infty}\frac{H_{p,-k}}{k \, n^{2k}} \nonumber \\
& \quad +\sum _{k=3}^{\infty} \frac{\zeta(k-1)}{k} \left(\frac{p}{n}\right)^k  +\sum _{k=1}^{\infty} \frac{B_{2k+2}}{4k \, (k+1) \, n^{2k}} \sum _{r=1}^{\infty} \frac{\left(2k\right)_r \, p^r}{r! \, n^{2r}} .
\end{align}
Rearranging terms in (\ref{asymptoticLogMoments_c1}) results in the following corollary.

\begin{corollary}\label{lem:expansion_iid_c1}
With $c = 1$, the logarithm of $\mu_p$ admits
\begin{align}\label{eq:logmellin_iid_c1}
 \ln \mu_{W_0,p}  = \sum_{q=0}^\infty \frac{ A_{p,q}}{n^{q}}
\end{align}
where
\begin{align} \label{a0_c1}
A_{p,0} &= p \\ \label{a1_c1}
A_{p,1} &= -\frac{p}{2} \\ \label{a2_c1}
A_{p,2} &= \frac{1}{2}p^2(\mathcal{C}+\log{n})-\frac{5 p}{12} .
\end{align}
If $q>2$ and odd,
\begin{align} \label{aq_odd_c1}
A_{p,q} &= \frac{p^q}{q} \zeta(q-1) , 
\end{align}
whereas, for $q$ even, 
\begin{align}
 \label{aq_even_c1}
A_{p,q} &= \frac{p^q}{q} \zeta(q-1) - \frac{2 H_{p,\frac{-q}{2}} }{q}- \frac{2 p^{\frac{q}{2}+1}q}{q^2-4} \nonumber \\
& + \left(p+\frac{1}{12}\right) \frac{2p^{\frac{q}{2}}}{q}+\sum _{j=1}^{\frac{q}{2}-1}\frac{B_{2j+2} \, p^{\frac{q}{2}-j} \left(2j\right)_{\frac{q}{2}-j}}{4j \, (j+1)\left(\frac{q}{2}-j\right)!} .
\end{align}
\end{corollary}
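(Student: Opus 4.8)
The plan is to run the argument of Proposition~\ref{lem:expansion_iid} with the ratio set to $c=1$, and to track the one structural change this forces. First I would put $m=n$ in the exact moment formula (\ref{eq:iid_fullrank}) and re-express the ratio of products of Gamma functions through the Barnes-$G$ function, exactly as in (\ref{moments_barnes}) but now specialised to $m=n$; this yields (\ref{moments_barnes_c1}),
$$\mu_{W_0,p}=\frac{\prod_{j=1}^p(n^2-j)}{n^p}\cdot\frac{G\!\left(n-\tfrac{p}{n}+1\right)}{G\!\left(1-\tfrac{p}{n}\right)G(n+1)}.$$
Taking logarithms gives (\ref{log_moments_barnes_c1}): a sum of $\sum_{j=1}^p\ln(n^2-j)$, the term $-p\ln n$, and the three Barnes-$G$ logarithms $\ln G(n-p/n+1)$, $-\ln G(n+1)$, $-\ln G(1-p/n)$.

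Next I would expand each piece for large $n$. The log-polynomial sum is handled by (\ref{log_expansion2}) with $c=1$, i.e.\ $\sum_{j=1}^p\ln(n^2-j)=2p\ln n-\sum_{\ell\ge1}H_{p,-\ell}/(\ell n^{2\ell})$. The first two Barnes-$G$ terms have large argument and admit the expansion (\ref{eq:barnesg}); here I would expand $\ln(n-p/n)$ and $(n-p/n)^2$ to all orders in $n^{-2}$ and subtract the $z=n$ contribution. Two things happen in this subtraction: the Glaisher--Kinkelin constant $\mathcal{A}$ and the leading $n^2\ln n$ and $n^2$ pieces cancel between $G(n-p/n+1)$ and $G(n+1)$, and the surviving cross-term in $\tfrac{z^2}{2}\ln z$ with $z=n-p/n$ produces the $\log n$ that ends up in $A_{p,2}$. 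The crucial departure from Appendix~\ref{app:asymp_moments} is the term $\ln G(1-p/n)$: its argument stays bounded as $n\to\infty$, so (\ref{eq:barnesg}) does not apply and I would instead use the Taylor expansion of $\ln G(1+w)$ about $w=0$, equation (\ref{log_barnes_taylor}). This single term is the sole source of the Euler--Gamma constant $\mathcal{C}$ and of every $\zeta(k-1)$ that appears in the coefficients.

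Substituting all of this into (\ref{log_moments_barnes_c1}) and collecting yields (\ref{asymptoticLogMoments_c1}). The corollary then follows by pure bookkeeping: regroup (\ref{asymptoticLogMoments_c1}) by powers of $n^{-q}$. Because the asymptotic $G$-expansions and the $H_{p,-\ell}$ and Bernoulli double sums contribute only even powers $n^{-2\ell}$, whereas the Taylor expansion of $\ln G(1-p/n)$ supplies odd powers too, one finds that for odd $q>2$ only the cubic-and-higher Taylor terms survive, giving $A_{p,q}=\tfrac{p^q}{q}\zeta(q-1)$; for even $q$ one combines the $\zeta$-contribution, the $H_{p,-q/2}$ term, the three pieces coming from $-\tfrac12\bigl(n^2+(p/n)^2-2p-\tfrac16\bigr)\sum_k p^k/(kn^{2k})$ --- the $n^2$ and $(p/n)^2$ parts merging via partial fractions into $-2p^{q/2+1}q/(q^2-4)$, the $-2p$ and $-\tfrac16$ parts into $(p+\tfrac1{12})\tfrac{2p^{q/2}}{q}$ --- and the Bernoulli double sum, to obtain (\ref{aq_even_c1}); the orders $q=0,1,2$ are read off directly, the $n^0$ coefficient being $\tfrac32p-\tfrac p2=p$ once the $k=1$ term of that same sum is accounted for, and $A_{p,1}=-\tfrac p2$ after the $\ln(2\pi)$ contributions from the $\tfrac z2\ln(2\pi)$ terms and from (\ref{log_barnes_taylor}) cancel.

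The main obstacle is not any individual estimate but keeping the $n^{-q}$ accounting exact across all the simultaneous expansions --- in particular distributing the $-\tfrac12 n^2\sum_k p^k/(kn^{2k})$ contribution correctly (it feeds every order), and checking that the logarithmic and Euler-constant pieces at order $n^{-2}$ assemble into $\tfrac12p^2(\mathcal{C}+\log n)-\tfrac{5p}{12}$ with no leftover constants. Convergence of the resulting expansion, understood in the same asymptotic sense as in Proposition~\ref{lem:expansion_iid}, is inherited termwise from the validity of (\ref{eq:barnesg}) for the large arguments $n$ and $n-p/n$ and of (\ref{log_barnes_taylor}) for the bounded argument $1-p/n$.
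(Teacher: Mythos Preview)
Your proposal is correct and follows essentially the same route as the paper: specialize the Barnes-$G$ representation to $m=n$, expand the large-argument factors via (\ref{eq:barnesg}) and the bounded-argument factor $\ln G(1-p/n)$ via its Taylor series (\ref{log_barnes_taylor}), assemble into (\ref{asymptoticLogMoments_c1}), and then regroup by powers of $n^{-q}$. If anything, your account of the final bookkeeping --- identifying which sources feed odd versus even $q$, the partial-fraction merge that produces $-2p^{q/2+1}q/(q^2-4)$, and the cancellation of the $\ln(2\pi)$ contributions at order $n^{-1}$ --- is more explicit than the paper's ``rearranging terms''.
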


Further, from (\ref{eq:logmellin_iid_c1}), we have that
\begin{align} \label{muExpansionProd_c1}
\mu_{W_0,p} &= \exp\left(\sum_{q=0}^\infty \frac{ A_{p,q}}{n^{q}}\right) \nonumber \\
&= e^{A_{p,0}(c)} \prod_{j=1}^{N} \sum_{r=0}^{N-j+1} \frac{1}{r!} \left( \frac{A_{p,j}}{n^{j}} \right)^r    + O\left(\frac{1}{n^{N+1}}\right) \; ,
\end{align}
where the second equality is obtained after expanding $\exp{x}$ around $x=0$ with $N$ an arbitrary positive integer.
Rearranging terms in (\ref{muExpansionProd_c1}) yields
\begin{align} \label{muExpansionSum_c1}
\mu_{W_0,p} = \sum_{j=0}^{\infty} \frac{\beta_{p,j}}{n^{j}}   \; ,
\end{align}
where $\beta_{p,0}=e^{A_{p,0}}$ and, for $j>0$,
\begin{align} \label{muExpansionCoeffs_c1}
\beta_{p,j} =  e^{A_{p,0}} \cdot \sum_{i_1+2i_2+\ldots+j i_j=j} \prod_{r=1}^j  \frac{ A_{p,r}^{i_r} } {i_r!}   \; ,
\end{align}
with $A_{p,q}$ given by (\ref{a0_c1})--(\ref{aq_even_c1}).

From (\ref{muExpansionSum_c1}) and the recursive relation between cumulants and moments (\ref{cumulants_moments}), we obtain the series expansion for the $p$th cumulant,
\begin{align} \label{kappaExpansion_c1}
\kappa_{W_0,1} &= \sum_{j=0}^{\infty} \frac{\alpha_{1,j}}{n^{j}}    \nonumber \\
\kappa_{W_0,p} &= \sum_{j=0}^{\infty} \frac{\alpha_{p,j}}{n^{p+j}}   \; , \,\, p>1 ,
\end{align}      
where $\alpha_{1,j}=\beta_{1,j}$ and  the rest of coefficients ($p>1$) obtained recursively as
\begin{align} \label{alphaRecursion_c1}
\alpha_{p,k} &= \beta_{p,p+k} - \sum _{j=0}^{p+k} \alpha_{1,j} \beta_{p-1,p+k-j} \nonumber \\
&- \sum _{r=2}^{p-1} \binom{p-1}{r-1} \sum _{j=0}^{p-r+k} \alpha_{r,j} \beta_{p-r,p-r+k-j} \,\, ,
\end{align}
with $\beta_{p,j}$ given by (\ref{muExpansionCoeffs_c1}).

Leveraging the above expressions, the leading-order term of the $p$th cumulant is found to be
\begin{align}
\alpha_{1,0} &= e \\
\alpha_{2,0} &= e^2 \, (\mathcal{C}+\log{n}) \\
\alpha_{p,0} &= e^p \, (p-1)! \, \zeta(p-1) , \quad p>2.
\end{align}


\subsection{Proof of Theorem \ref{the:asymptotic_cdf}}\label{app:asymptotic_cdf}

Through the invariance and homogeneity property of cumulants, the $p$th cumulant of $n(W_0 - \bar\mu)$, for $p\ge2$, can be written as $n^p \kappa_{W_0,p}$. From (\ref{kappaExpansion}), we thus observe that for $p\geq 3$, $\lim_{n \to \infty} n^p \kappa_{W_0,p} =0$, and thus $\lim_{n \to \infty} n(W_0 - \bar\mu)$ follows a Gaussian distribution, with zero mean and variance given by $\lim_{n \to \infty} n^2 \kappa_{W_0,2}$ which is obtained from $\kappa_{W_0,2}$ given by (\ref{k2}).

\subsection{Proof of Proposition \ref{lemm:semi_variance}}\label{app:semi_variance}

We start by expressing the moments of $W_1$ in terms of those of $W_0$, which can be expanded from Proposition \ref{lem:expansion_iid}.

From (\ref{eq:semicorr_fullrank}) and (\ref{eq:iid_fullrank}), we can write for $p=1$,
\begin{align}
\mu_{W_1,1} &= \left( \prod_{i=1}^n y_i^{\frac{1}{n}} \right) \frac{1}{n} \mu_{W_0,1} \sum_{j=1}^n \frac{1}{y_j} \notag \\ 
&= \Psi_1 \Psi_2 \mu_{W_0,1}  .
\end{align}
Leveraging Proposition \ref{lem:expansion_iid} yields
\begin{align} \label{eq:m1W1}
\mu_{W_1,1}
&= \Psi_1 \Psi_2 e^{A_{1,0}(c)} \notag \\
&  \times \left( 1 + \frac{1}{n^2} A_{1,1}(c) + \frac{1}{n^4} \left( \frac{A_{1,1}^2(c)}{2} + A_{1,2}(c)  \right)  \right) \notag \\
& +O\left(\frac{1}{n^6}\right) ,
\end{align}
which gives (\ref{eq:asympMeanW1}) after taking the leading-order term and substituting $A_{1,0}$ with (\ref{a0}). 


For $p=2$, we have
\begin{align}
& \mu_{W_1,2}   \notag \\
&=\frac{ \prod_{i=1}^n y_i^{\frac{2}{n}} }{n^2 } \frac{\mu_{W_0,2}}{\left( m -\frac{1}{n} \right)\left( m -\frac{2}{n} \right)}
 \left( \left(m-\frac{2}{n} \right) \left(m-\frac{2}{n}+1 \right) \right. \notag \\
& \times \left. \sum_{i=1}^n \frac{1}{y_i^{2}}    + 2 \left(m-\frac{2}{n}\right)^2\sum_{i=2}^n \sum_{j=1}^{i-1} \frac{1}{y_i y_j} \right)  \notag \\ 
&= \Psi_1^2 \mu_{W_0,2} \left( \Psi_2^2 + (\Psi_3 -\Psi_2^2) \frac{1}{n m \left(1-\frac{1}{n m} \right)} \right) ,
\end{align}
where the second equality follows from algebraic manipulations by noting that
\begin{align} \label{relationM2}
\left( \sum_{i=1}^n \frac{1}{y_i} \right)^2 = \sum_{i=1}^n \frac{1}{y_i^{2}} + 2 \sum_{i=2}^n \sum_{j=1}^{i-1} \frac{1}{y_i y_j} .
\end{align}
Using the expansion
\begin{align} \label{fracExpansion}
\frac{1}{n m \left(1-\frac{a}{n m} \right)} = \sum_{k=1}^{\infty} \frac{c^k a^{k-1}}{n^{2k}}
\end{align}
with $m=n/c$ and Proposition \ref{lem:expansion_iid} we arrive at
\begin{align} \label{eq:m2W1}
\mu_{W_1,2} &= \Psi_1^2 e^{A_{2,0}(c)} \left( \Psi_2^2 + \frac{1}{n^2} \left( \Psi_2^2 (A_{2,1}(c)-c) + \Psi_3 c \right) \right. \notag \\
& \left. + \frac{1}{n^4} \left( \Psi_2^2 \left(\frac{A_{2,1}^2(c)}{2} + A_{2,2}(c) \right) \right. \right. \notag \\
& \left. \left. +(\Psi_3-\Psi_2^2) (A_{2,1}(c)+c^2) \right) \right) 
+O\left(\frac{1}{n^6}\right) .
\end{align}

Now, the variance $ \sigma_{W_1}^2 = \mu_{W_1,2} - \mu_{W_1,1}^2$ is obtained using (\ref{eq:m1W1}) and (\ref{eq:m2W1}) as
\begin{align}
\sigma_{W_1}^2 &= \frac{1}{n^2} \Psi_1^2 e^{A_{2,0}(c)} \notag \\
& \left( \Psi_2^2 (A_{2,1}(c)-c) + \Psi_3 c - 2 \Psi_2^2 A_{1,1}(c)   \right) +O\left(\frac{1}{n^4}\right) ,
\end{align}
which yields (\ref{eq:varW1}) upon substituting $A_{2,0}(c)$, $A_{2,1}(c)$, and $A_{1,1}(c)$ with their respective values given in Proposition \ref{lem:expansion_iid}.

For the third cumulant, we first compute the third moment from (\ref{eq:semicorr_fullrank}). For $p=3$ we have
\begin{align} \label{m3W1}
\mu_{W_1,3}
&=\frac{ 6 \prod_{i=1}^n y_i^{\frac{2}{n}} }{n^3 } \frac{\mu_{W_0,3}}{\left( m -\frac{1}{n} \right)\left( m -\frac{2}{n} \right) }
 \notag \\
 &\left( \frac{1}{6}  \left(m-\frac{3}{n} +2 \right) \left(m-\frac{3}{n}+1 \right)  \sum_{i=1}^n \frac{1}{y_i^{3}} \right. \notag \\
& \left.     + \frac{1}{2} \left(m-\frac{3}{n}+1\right) \left(m-\frac{3}{n}\right) \sum_{ 
\begin{subarray}{c} i,j=1,\ldots,n  \\
i \neq j  \end{subarray}
} \frac{1}{y_i^2 y_j} \right. \notag \\
& \left. + \left(m-\frac{3}{n}\right)^2 \sum_{\begin{subarray}{c} i,j,k=1,\ldots,n \\   i \neq j \neq k \end{subarray}} \frac{1}{y_i y_j y_k}  \right)  .
\end{align}
Noting that
\begin{align}
 & \left( \sum_{i=1}^n \frac{1}{y_i^{3}} \right)^3 =  \sum_{i=1}^n \frac{1}{y_i^{3}} + 3 \hspace{-3mm} \sum_{\begin{subarray}{c}  i,j=1,\ldots,n \\  i \neq j \end{subarray}} \frac{1}{y_i^2 y_j}  + 6  \hspace{-3mm}  \sum_{\begin{subarray}{c}  i,j,k=1,\ldots,n \\ i \neq j \neq k \end{subarray}} \frac{1}{y_i y_j y_k} ,
\end{align}
and
\begin{align}
 \sum_{i=1}^n \frac{1}{y_i}   \sum_{k=1}^n \frac{1}{y_k^2} =  \sum_{k=1}^n \frac{1}{y_k^3}
   \sum_{\begin{subarray}{c} i,j=1,\ldots,n \\  i \neq j \end{subarray}} \frac{1}{y_i^2 y_j}  ,
\end{align}
we can rewrite (\ref{m3W1}) as
\begin{align}
\mu_{W_1,3}
&= \Psi_1^3 \frac{\mu_{W_0,3}}{\left( m -\frac{1}{n} \right)\left( m -\frac{2}{n} \right) } \notag \\
& \left( \Psi_2 \Psi_3 \frac{3}{n} \left( m - \frac{3}{n}\right) + \Psi_2^3 \left( m - \frac{3}{n}\right)^2 + \Psi_4 \frac{2}{n^2} \right) \notag \\
&=  \Psi_1^3 \mu_{W_0,3} \left( \Psi_2^3 + \Psi_2^3   \frac{1}{n m \left(1-\frac{2}{n m} \right)}  \right. \notag \\
& \left.   + \left( 3 \Psi_2 \Psi_3 -4 \Psi_2^3 \right) \frac{1}{n m \left(1-\frac{1}{n m} \right)} \right. \notag \\
& \left. + \left( 2\Psi_4 -3\Psi_2\Psi_3 \right) \frac{1}{n^2 m^2 \left(1-\frac{1}{n m} \right) \left(1-\frac{2}{n m} \right)}  \right) .
\end{align}
Then, using (\ref{fracExpansion}) together with the expansion for $\mu_{W_0,3}$ given in Proposition \ref{lem:expansion_iid}, we arrive at
\begin{align} \label{eq:m3W1}
& \mu_{W_1,3} \notag \\
&=  \Psi_1^3 e^{A_{3,0}(c)} \left( \Psi_2^3 + \frac{1}{n^2} \left( \Psi_2^3 A_{3,1}(c) + 3 \Psi_2 \Psi_3 c - 3 \Psi_2^3 c \right) \right. \notag \\
& \left. + \frac{1}{n^4}\left(\Psi_2^3 \left(\frac{ A_{3,1}^2(c) }{2}+ A_{3,2}(c) \right)+3 \Psi_2 \Psi_3 c A_{3,1}(c) \right. \right. \notag \\
& \left. \left. - 2 \Psi_2^3 c^2 - 3 \Psi_2^3 c A_{3,1}(c) + 2\Psi_4 c^2 \right)\right) +O\left(\frac{1}{n^6}\right).
\end{align}
Finally, the third cumulant,
\begin{align}
 \kappa_{W_1,3} = \mu_{W_1,3} - 3 \mu_{W_1,2} \mu_{W_1,1} + 2 \mu_{W_1,1}^3 ,
\end{align}
is obtained using (\ref{eq:m1W1}), (\ref{eq:m2W1}), and (\ref{eq:m3W1}), resulting in
\begin{align}
\kappa_{W_1,3} &= \frac{1}{n^4} e^{A_{3,0}(c)} \Psi_1^3  \left( \Psi_2^3 \left( \frac{A_{3,1}^2(c)}{2} + A_{3,2}(c) - 3 c A_{3,1}(c) \right. \right. \notag \\
& \left. \left.  - 
      3 A_{1,1}(c) (A_{2,1}(c) - c) - 3 \frac{A_{2,1}^2(c)}{2} -3 A_{2,2}(c) \right. \right. \notag \\
      & \left. \left. + 
      3 A_{2,1}(c) c + c^2 + \frac{15}{2} A_{1,1}^2(c) + 3 A_{1,2}(c) \right) \right. \notag \\
      & \left. + 
   \Psi_2 \Psi_3 3 c (A_{3,1}(c) - A_{1,1}(c) - A_{2,1}(c) - c) + 
   \Psi_4 2 c^2 \right) ,
\end{align}
which finally yields (\ref{eq:k3W1}) upon substituting $A_{p,q}(c)$ with the expressions given by Proposition \ref{lem:expansion_iid}, and further simplifications.

%
%
%

\end{appendix}

\bibliographystyle{IEEEtran}
\bibliography{IEEEabrv,refs}
\end{document}